\theoremstyle{definition}
\newtheorem{theo}{Theorem}[section]
\newtheorem{lem}[theo]{Lemma}
\newtheorem{prop}[theo]{Proposition}
\newtheorem{rem}[theo]{Remark}
\numberwithin{equation}{section}
\theoremstyle{definition}
\newtheorem{defi}[theo]{Definition}
\newtheorem{ex}[theo]{Example}
\newcommand{\R}{\mathbb{R}}
\newcommand{\Z}{\mathbb{Z}}
\newcommand{\C}{\mathbb C} 
\newcommand{\N}{\mathbb N} 
\newcommand{\D}{\mathbb D}
\newcommand{\HH}{\mathbb{H}}
\begin{document}

\author[Ban, Bertrand, Jaber Chehayeb, Salha, Tabbara]{Seok Ban, Florian Bertrand, Amir Jaber Chehayeb, Adam Salha, Walid Tabbara}
\title{On the higher order Kobayashi pseudometric}

\begin{abstract}
We study the higher order Kobayashi pseudometric introduced by Yu. We first obtain estimates of this pseudometric in a special pseudoconvex domain in $\C^3$. We then study the structure of the higher order extremal discs and their connection with the standard extremal discs for the  Kobayashi metric. 
\end{abstract} 
\thanks{Research of the second  author was  supported by the Center for Advanced Mathematical Sciences}

\maketitle 

\section*{Introduction}
In his thesis and related papers \cite{yu,yu1,yu2} Yu introduced a higher order version of the Kobayashi pseudometric  for the purpose of measuring precisely the type invariants of  boundaries of pseudoconvex domains. This invariant pseudometric shares many of the fundamental properties of the standard Kobayashi pseudometric; in particular, in the complex plane, 
the higher order Kobayashi pseudometric and the Kobayashi pseudometric coincide on any domain (see for instance \cite{ja-pf}).  The pseudometric introduced by Yu was later on studied by many different authors \cite{ki-hw-ki-le,ki,ni1,ni2,ja-pf}. In this paper, we focus on two aspects of this pseudometric. We first consider a particular domain given by Yu \cite{yu} and obtain new estimates of the higher order Kobayashi metric (Theorem \ref{theoest}); as an application of our method, we obtain the exact value of the Kobayashi metric in certain cases (Proposition \ref{propexact}). This answers partially  two questions addressed by Jarnicki and Pflug (problems 3.7 and 3.8 p.149 \cite{ja-pf}). Then, inspired by the works of Poletski \cite{po}, Edigarian \cite{ed}, Jarnicki and Pflug \cite{ja-pf}, and the paper \cite{be-de-jo}, we study the higher order pseudometric by means of the corresponding extremal discs and focus on their connection with the usual extremal discs (Theorem \ref{theopext}).  We conclude the paper with an appendix in which we prove higher order Schwarz type lemmas. 

\section{Preliminaries}

We denote by $\D=\{\zeta \in \C \ | \ |\zeta|<1\}$ the unit disc in $\C$ and  by $\D_r=\{\zeta \in \C \ | \ |\zeta|<r\}$ the disc centered at $0$ and radius $r>0$.   

\subsection{Invariant pseudometrics and the Kobayashi pseudometric of higher order}

Let $\Omega \subset \C^n$ be a domain. Following \cite{ko,ro}, the {\it Kobayashi pseudometric} of the domain $\Omega$ at $p\in \Omega$ and 
$v \in T_p\Omega=\C^n$ is defined by
$$K_{\Omega}\left(p,v\right)=\inf 
\left\{\frac{1}{r}>0 \ \big| \  \exists \ f: \D \to \Omega \ \mbox{holomorphic}, f\left(0\right)=p, f'(0)=rv\right\}.$$
We also recall  the {\it Carath\'eodory pseudometric} $C_\Omega$ at $p \in \Omega$ and $v \in \C^n$ defined by
$$C_\Omega(p,v) = \text{sup}\{\left|d_pg (v)\right| \ | \ g: \Omega \rightarrow \D \ \mbox{holomorphic}, g(p) = 0\},$$
where $d_pg (v)$ is the differential at $p$  of $g$ in the direction $v$.  
Note that in the case of the unit disc $\D \subset \C$, it follows from Schwarz Lemma that both  $K_\D$  and $C_\D$  coincide with the Poincar\'e metric, that is, for $\zeta \in \D$ and $v \in \C$ 
$$K_\D(\zeta,v) = C_{\D}(p,v)=\cfrac{|v|}{1 - |\zeta|^2}.$$

Kobayashi pseudometrics of higher order were introduced by different authors \cite{ve,yu}. In this paper, we focus on the one defined by Yu in \cite{yu,yu1,yu2}. More precisely, for a positive integer $k>0$, the {\it $k^{th}$ order Kobayashi pseudometric} is defined by 
$$K^k_{\Omega}\left(p,v\right)=\inf
\left\{\frac{1}{r}>0 \ | \  \exists \ f: \D \to \Omega \ \mbox{holomorphic}, f\left(0\right)=p, \nu(f)\geq k, f^{(k)}(0)=k!rv\right\},$$
where $p\in \Omega$, $v \in \C^n$, and $\nu(f)$ is the vanishing order of $f$ defined as the degree of the first nonzero  term in the power expansion of $f$. 

Before stating some fundamental properties of the higher order Kobayashi metric, we first establish the following higher order Schwarz Lemma on the unit disc.
\begin{lem}[Higher order Schwarz Lemma]\label{lemhigh} 
Let $f:\D \rightarrow \D$ be a holomorphic function satisfying $f^{(\ell)}(0) = 0$  for all $\ell=0,\ldots,k-1$. Then we have for all $\zeta \in \D$
$$|f(\zeta)| \leq |\zeta|^{k}$$
and 
 $$|f^{k}(0)|\leq k!$$
Moreover, if $|f(\zeta)| = |\zeta|^k$ for some $\zeta \neq 0$ or $|f^{(k)}(0)| = k!$, then $f(\zeta) = e^{i\theta}\zeta^k$ for some $\theta \in \R$. 
\end{lem}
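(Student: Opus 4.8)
The natural approach is to reduce this to the classical Schwarz Lemma by dividing out the zero of order $k$ at the origin. Since $f:\D\to\D$ is holomorphic with $f^{(\ell)}(0)=0$ for $\ell=0,\dots,k-1$, the function $g(\zeta):=f(\zeta)/\zeta^k$ extends holomorphically across $0$ (the singularity is removable because the numerator vanishes to order at least $k$), and $g(0)=f^{(k)}(0)/k!$. So the first step is to justify this extension carefully via the power series expansion $f(\zeta)=\sum_{j\geq k} a_j\zeta^j$, which gives $g(\zeta)=\sum_{j\geq k} a_j\zeta^{j-k}$, a genuine holomorphic function on $\D$.

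The second step is to show $|g|\le 1$ on $\D$, i.e. $g:\D\to\ovl{\D}$. For $\zeta$ with $|\zeta|$ close to $1$ we have $|g(\zeta)|=|f(\zeta)|/|\zeta|^k\le 1/|\zeta|^k$, which tends to $1$ as $|\zeta|\to 1$. Applying the maximum principle to $g$ on the disc $\D_r$ for each $r<1$ gives $|g(\zeta)|\le \sup_{|w|=r}|g(w)|\le 1/r^k$ for $|\zeta|\le r$; letting $r\to 1$ yields $|g(\zeta)|\le 1$ on all of $\D$. This establishes both claimed inequalities at once: $|f(\zeta)|=|\zeta|^k|g(\zeta)|\le|\zeta|^k$, and $|f^{(k)}(0)|=k!|g(0)|\le k!$.

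For the rigidity statement, I would invoke the classical maximum modulus principle (equivalently, the equality case of the ordinary Schwarz Lemma applied to $g$, noting $g$ need not vanish at $0$ but $|g|\le 1$ still forces rigidity from an interior maximum). If $|f(\zeta_0)|=|\zeta_0|^k$ for some $\zeta_0\neq 0$, then $|g(\zeta_0)|=1$, so $|g|$ attains its maximum at an interior point, hence $g$ is a unimodular constant $e^{i\theta}$, giving $f(\zeta)=e^{i\theta}\zeta^k$. Similarly, if $|f^{(k)}(0)|=k!$ then $|g(0)|=1$, again an interior maximum, so $g\equiv e^{i\theta}$.

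The only subtlety — really the one place to be slightly careful rather than a genuine obstacle — is the passage to the limit $r\to 1$ in the maximum principle argument and the removable singularity at $0$; both are standard but should be spelled out since the whole point of the lemma is the quantitative bound. Everything else follows formally from the classical one-variable theory, so I expect the write-up to be short.
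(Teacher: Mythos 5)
Your proof is correct, but it takes a different route from the paper's. The paper argues by induction on $k$: assuming the $k$-th order statement, it sets $g(\zeta)=f(\zeta)/\zeta^k$ for an $f$ vanishing to order $k+1$, uses the inductive bound $|f(\zeta)|\le|\zeta|^k$ to conclude $g:\D\to\D$, notes $g(0)=0$, and then invokes the \emph{classical} Schwarz Lemma as a black box to finish (and likewise for the equality case). You instead divide by $\zeta^k$ in one step and re-run the standard proof of the Schwarz Lemma directly: removable singularity via the power series, the maximum principle on $\D_r$ giving $|g|\le 1/r^k$ there, the limit $r\to 1$, and the maximum modulus principle for rigidity. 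Your version is self-contained and avoids the induction entirely; it also correctly flags the one point where the classical Schwarz Lemma cannot be quoted verbatim, namely that $g(0)=f^{(k)}(0)/k!$ need not vanish, so the rigidity step must rest on the interior-maximum form of the maximum modulus principle rather than on the equality case of Schwarz. The paper's induction sidesteps that issue because at each stage the quotient does vanish at the origin; the price is that the equality discussion is compressed into ``the result follows once again by induction.'' Both arguments are complete; yours is arguably the more transparent write-up of the same underlying idea of dividing out the zero at the origin.
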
 
Note that the proof of this lemma is essentially contained in the proof of Proposition 2.2 in \cite{yu}. We give a slightly different proof. 
\begin{proof}
We proceed by induction. For $k = 1$, this is the classical Schwarz Lemma. Assume that the $k^{th}$ order Schwarz Lemma holds. Let $f:\D \rightarrow \D $ be a holomorphic function with $f^{(\ell)}(0) = 0$ for  $\ell=0, \dots k$. Define the disc $g(\zeta)=f(\zeta)/\zeta^k$. According to the $k^{th}$-order Schwarz Lemma, 
we have $g: \D \to \D$. Since $g(0)=0$, the classical Schwarz Lemma applied to $g$ leads to $|f(\zeta)| \leq |\zeta|^{k+1}$ for all $\zeta \in \D$ and 
$$|g'(0)|=\frac{|f^{k+1}(0)|}{(k+1)!}\leq 1.$$
In case of equality, the result follows once again by induction.  
\end{proof}

We now summarize important basic properties of the higher order Kobayashi pseudometric which have been established by many authors \cite{yu,ja-pf}.
\begin{prop}[\cite{yu,ja-pf}]\label{propprop}
Let $\Omega,\Omega' \subset \C^n$ be two domains. Let $k>0$ be a positive integer.  
\begin{enumerate}[i.]
\item Let $F:\Omega \to \Omega'$ be a holomorphic map. Then for any  $p\in \Omega$ and $v \in T_p\Omega$, we have
$$K^k_{\Omega'}(F(p),d_pF(v))\leq K^k_{\Omega}(p,v).$$
In particular,  the higher order Kobayashi pseudometric is invariant under biholomorphisms.
\item In case $n=1$, the pseudometrics $K^k_{\Omega}$ and $K_{\Omega}$ coincide.
\item We have $K^{mk}_\Omega \leq K^{k}_\Omega$ for any $m \in \; \N$.
\item For any  $p\in \Omega$ and $v \in T_p\Omega$, we have 
$$C_{\Omega}(p,v) \leq K^k_{\Omega}(p,v) \leq K_{\Omega}(p,v).$$ 
\end{enumerate}
\end{prop}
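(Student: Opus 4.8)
The plan is to verify each of the four items separately, since they are largely independent and each reduces to a short manipulation of the defining extremal families.

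For item (i), the strategy is the standard transport-of-competitors argument. Given a holomorphic $f:\D\to\Omega$ with $f(0)=p$, $\nu(f)\ge k$, and $f^{(k)}(0)=k!\,rv$, I would compose with $F$ to get $g=F\circ f:\D\to\Omega'$. Then $g(0)=F(p)$, and by the chain rule (together with $f^{(\ell)}(0)=0$ for $\ell<k$, which forces the lower-order terms of the Faà di Bruno expansion to vanish) one gets $\nu(g)\ge k$ and $g^{(k)}(0)=k!\,r\,d_pF(v)$. Hence every competitor for $K^k_\Omega(p,v)$ produces, with the same $r$, a competitor for $K^k_{\Omega'}(F(p),d_pF(v))$, giving the inequality after taking infima; biholomorphic invariance follows by applying this to $F$ and $F^{-1}$. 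The only point needing a word of care is the claim that $\nu(g)\ge k$: writing $f(\zeta)=\zeta^k h(\zeta)$ with $h$ holomorphic and $h(0)=rv$, one sees $g(\zeta)-F(p)$ is divisible by $\zeta^k$ because $F-F(p)$ vanishes to order $1$ at $p$ and $f-p$ vanishes to order $k$ at $0$.

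For item (ii), the claim is that in one variable $K^k_\Omega=K_\Omega$. The inequality $K^k_\Omega\ge$ (something) is not what we want directly; rather, I would argue $K^k_\Omega\le K_\Omega$ is immediate only for $k=1$, so instead: given a competitor $f:\D\to\Omega$ for $K_\Omega(p,v)$ with $f(0)=p$, $f'(0)=rv$, replace $\zeta$ by $\zeta^{1/k}$ — more precisely, consider whether $f(\zeta^k)$ works; it has vanishing order $\ge k$ but its $k$-th derivative at $0$ is $k!\,f'(0)=k!\,rv$, so it is a competitor for $K^k_\Omega(p,v)$, giving $K^k_\Omega\le K_\Omega$. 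Conversely, given $f:\D\to\Omega$ with $\nu(f)\ge k$ and $f^{(k)}(0)=k!rv$, in one complex variable one can extract a $k$-th root: write $f(\zeta)=p+\zeta^k h(\zeta)$ with $h(0)=rv$; on a disc where this stays in $\Omega$ one builds $\tilde f(\zeta)=p+\zeta\,h(\zeta^{1/k})$ suitably — this is the delicate part and is exactly where the dimension hypothesis is used. I would instead cite the clean argument in \cite{ja-pf}: in $\C$ one can always reparametrize an order-$k$ disc to a first-order disc with the same derivative datum, using that any holomorphic function with a zero of order $k$ is $(\text{unit})\cdot\zeta^k$ and the unit has a holomorphic $k$-th root. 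Combining both inequalities gives equality.

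Items (iii) and (iv) are the easiest. For (iii), given $f:\D\to\Omega$ with $\nu(f)\ge k$ and $f^{(k)}(0)=k!rv$, I would form $\tilde f(\zeta)=f(\zeta^m)$; then $\nu(\tilde f)\ge mk$ and $\tilde f^{(mk)}(0)=(mk)!\,r v$ (the factor works out because only the $\zeta^k$-term of $f$ contributes to the $\zeta^{mk}$-term of $\tilde f$), so $\tilde f$ is a competitor for $K^{mk}_\Omega(p,v)$ with the same $r$, yielding $K^{mk}_\Omega\le K^k_\Omega$. For (iv), the right-hand inequality $K^k_\Omega\le K_\Omega$ is the $m=k$, base-$1$ instance of the reparametrization in (iii) (take $f$ a first-order extremal disc and replace $\zeta$ by $\zeta^k$); the left-hand inequality $C_\Omega(p,v)\le K^k_\Omega(p,v)$ follows from the Schwarz-type Lemma \ref{lemhigh}: if $f:\D\to\Omega$ is an order-$k$ competitor and $g:\Omega\to\D$ is a Carathéodory competitor with $g(p)=0$, then $g\circ f:\D\to\D$ vanishes to order $\ge k$ at $0$, so by Lemma \ref{lemhigh} applied to $g\circ f$ we get $|(g\circ f)^{(k)}(0)|\le k!$, i.e. $|d_pg(v)|\le 1/r$; taking the sup over $g$ and the inf over $f$ gives the bound. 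The main obstacle is really only item (ii): making the passage from an order-$k$ disc back to an order-$1$ disc rigorous in one variable without smuggling in the multivalued $k$-th root carelessly — everything else is bookkeeping with power series, and I would keep the write-up terse there, deferring to \cite{yu,ja-pf} for the one-variable reduction.
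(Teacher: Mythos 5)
Your treatment of (i), (iii) and (iv) is correct and follows essentially the same route as the paper: transport of competitors under $F$ for (i), the substitution $\zeta\mapsto\zeta^m$ for (iii), and the composition $g\circ f$ together with the higher order Schwarz Lemma \ref{lemhigh} for the Carath\'eodory bound in (iv). For (ii) you end up where the paper does, namely citing Proposition 3.8.8 of \cite{ja-pf} (plus uniformization, with the disc and punctured disc handled by Lemmas \ref{lemhigh} and \ref{lempdhigh}), so the overall structure matches.

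One caution on (ii): the root-extraction mechanism you sketch before falling back on the citation cannot be repaired. Even when $f-p=\zeta^k u(\zeta)$ with $u$ zero-free on all of $\D$ (which need not hold) and $u=w^k$, so that $f=p+\sigma^k$ with $\sigma(\zeta)=\zeta w(\zeta)$, the candidate first-order disc $p+\sigma$ has no reason to take values in $\Omega$ --- membership of $p+\sigma^k$ in $\Omega$ says nothing about $p+\sigma$. The one-variable equality $K^k_\Omega=K_\Omega$ is genuinely a covering-space statement, not a power-series reparametrization, which is why the paper routes it through the uniformization theorem and the explicit equality cases on $\D$ and $\D\setminus\{0\}$ rather than through a local normal form for zeros of order $k$.
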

For completeness, we include the proof.   
\begin{proof} {\phantom{m}}
We first prove {\rm i}. 
 Let $g: \D \rightarrow \Omega$ be a holomorphic disc satisfying $g(0) = p$ and 
 $g^{(k)}(0) = k!r v$ for some $r>0$. Consider the composition $F\circ g: \D \rightarrow \Omega'$ and note that $(F\circ g)(0) = F(p)$ and 
 $$(F\circ g)^{(k)}(0) = d_pF(g^{(k)}(0)) = k!r  d_pF(v).$$ It follows directly that $K_{\Omega'}^k(F(p),d_pF(v)) \leq K_\Omega^k(p,v)$. 
   
The proof of {\rm ii} follows from Proposition 3.8.8 in \cite{ja-pf} and the uniformization theorem. The fact that $K^k_{\D}$ and $K_{\D}$ coincide follows directly from the higher order Schwarz Lemma \ref{lemhigh} since by invariance of both metric, it is enough to show that 
$K^k_{\D}(0,v)=K_{\D}(0,v)=|v|$. Their equality on the punctured disc follows, for instance, from Lemma \ref{lempdhigh}. 

For   {\rm iii}, we follow exactly \cite{ja-pf}. Let $p \in \Omega$ and $v \in T_p\Omega$ and let $f:\D \to \Omega$ be a holomorphic disc with $f\left(0\right)=p$, $\nu(f)\geq k$, and $f^{(k)}(0)=k!rv$. We define $g(\zeta)=f(\zeta^m)$.  The holomorphic disc $g:\D\to \D$ satifsies 
$g\left(0\right)=0$, 
$\nu(g)\geq km$, and 
$$g^{(km)}(0)=(mk)!\frac{f^{(k)}(0)}{k!}=(mk)!rv.$$ Thus $K^{mk}_\Omega (p,v) \leq K^{k}_\Omega(p,v)$.  
 
Finally, we prove  {\rm iv}. The inequality $K^k_{\Omega}(p,v) \leq K_{\Omega}(p,v)$ follows immediately from  {\rm iii}. 
For the inequality concerning the Carath\'eodory metric, we consider $f: \D \to \Omega$ and $ g:\Omega \to \D$ both holomorphic and satisfying 
$f\left(0\right)=p, \nu(f)\geq k, f^{(k)}(0)=k!rv, \mbox{and } g(p)=0$. The composition $g\circ f:\D\to\D$ is such that $g\circ f(0)=0$ and, since $\nu(f)\geq k$, we have 
$(g\circ f)^{(l)}(0)=0$ for $l<k$. We then apply the higher order Schwarz Lemma \ref{lemhigh} to obtain $|(g\circ f)^{(k)}(0)| \leq k!$, and since 
$$(g\circ f)^{(k)}(0)= d_{f(0)}g(f^{(k)}(0))= d_{p}g(k!rv),$$
we get $|d_{p}g(v)|\leq \frac{1}{r}.$
This shows  {\rm iv} and ends the proof
    
\end{proof}

\begin{rem} The following higher order Kobayashi metric was introduced by Venturini in \cite{ve}
$$F^k_{\Omega}\left(p,\xi\right):=\inf 
\left\{\frac{1}{r}>0 \ | \  f: \D \rightarrow \Omega
\mbox{ holomorphic}, f\left(0\right)=p, f^{\ell}(0)=r^\ell\xi_\ell, 1 \leq \ell \leq k\right\}$$
for any $p\in M$ and any $k$-jet  $\xi=(\xi_1,\ldots,\xi_k)\in J^k_p(\Omega)=\C^{kn}$ at $p$. 
The relation between these two higher order Kobayashi pseudometrics is given by 
$$k!\left(F^k_{\Omega}\left(p,(0,\cdots,0,v)\right)\right)^k=K^k_{\Omega}\left(p,v\right).$$
\end{rem}

\subsection{Higher order extremal and stationary discs}
Following Lempert \cite{le}, we define higher order extremal discs as follows:
\begin{defi} Let $k>0$ be a positive integer and let $\Omega \subset \C^n$ be a domain.  A holomorphic disc $f: \D \rightarrow \Omega$ is a {\it  $k$-extremal disc for the pair $(p,v) \in \Omega \times T_p\Omega$} if  $f(0)=p$, $\nu(f)\geq k$, $f^{(k)}(0)=k!\lambda v$ 
with $\lambda>0$ and if
 $g: \D \rightarrow \Omega$ is  holomorphic and such that $g(0)=p$, $\nu(g)\geq k$, $g^{(k)}(0)=k!\mu v$ with $\mu>0$, then $\mu\leq \lambda$. In case $k=1$, we will simply say that $f$ is an {\it   extremal disc}.
 We  denote by $X_{\Omega}^k(p,v)$ the set of  $k$-extremal discs for the pair $(p,v)$, and we set $X_{\Omega}^1(p,v)=X_{\Omega}(p,v).$
\end{defi}

Note that in case the domain $\Omega$ is bounded, then by Montel's theorem, for any pair
$(p,v) \in \Omega \times T_p\Omega$   there exists a corresponding  $k$-extremal disc. 
The following result  is a direct consequence of  Propostion \ref{propprop} $i.$ 
\begin{lem}\label{leminv}
Let $\Omega \subset \C^n$ be a domain and let $F$ be an automorphism of $\Omega$. Then for any $p \in D, v \in T_p\Omega$ we have 
$$X_\Omega^k(F(p),d_pF(v)) = F_* X_\Omega^k(p,v).$$ 
\end{lem}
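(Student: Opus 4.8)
The plan is to show the two sets $X_\Omega^k(F(p),d_pF(v))$ and $F_*X_\Omega^k(p,v)$ coincide by proving a double inclusion, using the fact that $F$ is an automorphism (so $F^{-1}$ is also holomorphic from $\Omega$ to $\Omega$). First I would fix a disc $f \in X_\Omega^k(p,v)$, so $f(0)=p$, $\nu(f)\ge k$, and $f^{(k)}(0)=k!\lambda v$ with $\lambda>0$ maximal among all such discs through $(p,v)$. I would then consider $g := F\circ f : \D \to \Omega$. Clearly $g(0)=F(p)$; moreover $\nu(g)\ge k$ since $f-p$ vanishes to order $\ge k$ at $0$ and $F$ is holomorphic (the Taylor expansion of $F\circ f$ around $0$ has no terms of degree $1,\dots,k-1$); and by the chain rule, exactly as in the proof of Proposition \ref{propprop}(i), $g^{(k)}(0) = d_pF(f^{(k)}(0)) = k!\lambda\, d_pF(v)$. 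So $g$ is a competitor for $(F(p),d_pF(v))$ with the \emph{same} $\lambda$.

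It remains to check that this $\lambda$ is optimal, i.e. that $g$ is actually $k$-extremal for $(F(p),d_pF(v))$, not merely admissible. Suppose some $h : \D \to \Omega$ has $h(0)=F(p)$, $\nu(h)\ge k$, $h^{(k)}(0)=k!\mu\, d_pF(v)$ with $\mu > \lambda$. Then apply the same construction in reverse: $F^{-1}\circ h : \D \to \Omega$ satisfies $(F^{-1}\circ h)(0)=p$, $\nu(F^{-1}\circ h)\ge k$, and $(F^{-1}\circ h)^{(k)}(0) = d_{F(p)}F^{-1}\big(h^{(k)}(0)\big) = k!\mu\, d_{F(p)}F^{-1}(d_pF(v)) = k!\mu\, v$, using $d_{F(p)}F^{-1}\circ d_pF = \mathrm{id}$. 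This contradicts the $k$-extremality of $f$ for $(p,v)$. Hence $g = F\circ f = F_*f \in X_\Omega^k(F(p),d_pF(v))$, which gives $F_*X_\Omega^k(p,v) \subseteq X_\Omega^k(F(p),d_pF(v))$.

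For the reverse inclusion, I would apply what was just proved to the automorphism $F^{-1}$ at the point $F(p)$ with direction $d_pF(v)$: this yields $F^{-1}_*X_\Omega^k(F(p),d_pF(v)) \subseteq X_\Omega^k(F^{-1}(F(p)),d_{F(p)}F^{-1}(d_pF(v))) = X_\Omega^k(p,v)$, and applying $F_*$ to both sides gives $X_\Omega^k(F(p),d_pF(v)) \subseteq F_*X_\Omega^k(p,v)$. Combining the two inclusions proves the claim. I do not expect any real obstacle here — the only point requiring a line of care is the observation that composing with a holomorphic map does not decrease the vanishing order and transforms the $k$-th derivative by the differential, which is precisely the computation already carried out in the proof of Proposition \ref{propprop}(i); everything else is formal manipulation using that $F$ is invertible.
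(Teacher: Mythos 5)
Your proof is correct and follows essentially the same route as the paper, which simply records the lemma as a direct consequence of Proposition \ref{propprop}(i) without writing out the details. Your double-inclusion argument---pushing a $k$-extremal disc forward by $F$, checking via the chain rule that vanishing order and the $k$-th derivative transform as claimed, and using $F^{-1}$ both to rule out a better competitor and to obtain the reverse inclusion---is precisely the standard expansion of that remark, and I see no gaps.
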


\vspace{0.5cm}
Let $\Omega = \{\rho<0\}\subset \mathbb C^n$ be a smooth domain, where $\rho$ is a defining function. We set 
$\displaystyle \partial \rho = \left(\frac{\partial \rho}{\partial z_1},\ldots,\frac{\partial \rho}{\partial z_n}\right)$. Let $k$ be 
a positive integer. Following \cite{be-de}, we define
\begin{defi}
A bounded holomorphic map $f:\Delta\to \mathbb C^n$ is a \emph{$k$-stationary disc attached to $b\Omega$ in the $L^{\infty}$ sense}  if $f(b\Delta)
\subset b\Omega$ a.e. and if there exists a  $L^{\infty}$ function $c:b\Delta\to \mathbb R^+$ such that the function $ \zeta\mapsto \zeta^k 
c(\zeta)\partial \rho(f(\zeta))\in \mathbb C^n$ defined on $b\Delta$ extends holomorphically to $\Delta$.
\end{defi}
These discs were introduced in \cite{be-de} and generalize the notion of stationary discs introduced by Lempert \cite{le}. They are  particularly well adapted to study Levi degenerate hypersurfaces.

\section{An example of Yu}

In this section, we are interested in the behavior of the higher order Kobayashi pseudometric in a particular domain. Following Yu \cite{yu}, we consider the  domain 
$\Omega =\{z=(z_1, z_2, z_3) \in \C^3 \ | \  \rho(z,\overline z)<0\} \subset \C^3$, where 
$$ \rho(z,\overline z) = \Re e z_3 + \big|z_1^2 - z_2^3\big|^2.$$
The defining function $\rho$ is plurisubharmonic, and thus, the domain $\Omega$ is pseudoconvex. This example is the first example of a domain in which the higher order Kobayashi metric does not coincide with the usual Kobayashi metric (see Proposition \ref{propyu} below).

\subsection{Estimates of the higher order Kobayashi pseudometric}

Our goal is to study the higher order Kobayashi pseudometric for the pair 
$$(p,v) = \left((0,0,-1), (0,1,0)\right) \in \Omega \times \C^3.$$ 
The following proposition was proved in \cite{yu}.
\begin{prop}[\cite{yu}]\label{propyu} We have 
$$K_\Omega\left((0,0,-1), (0,1,0)\right) = 1,$$
and for any positive integer $k>0$
$$K^{2k}_\Omega\left((0,0,-1), (0,1,0)\right) = 0.$$
\end{prop}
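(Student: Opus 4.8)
The plan is to treat the two assertions separately, since they are of genuinely different character. For the statement $K_\Omega\big((0,0,-1),(0,1,0)\big)=1$, I would first establish the upper bound by exhibiting an explicit holomorphic disc $f:\D\to\Omega$ with $f(0)=(0,0,-1)$ and $f'(0)=(0,1,0)$. The natural candidate is $f(\zeta)=(0,\zeta,-1)$, or a suitably rescaled version thereof; one checks that $\rho(f(\zeta),\overline{f(\zeta)})=-1+|\zeta|^6<0$ on $\D$, so $f$ maps into $\Omega$, and since $f'(0)=(0,1,0)$ we get $K_\Omega\big((0,0,-1),(0,1,0)\big)\le 1$. For the matching lower bound I would use Proposition \ref{propprop}(iv), namely $C_\Omega(p,v)\le K_\Omega(p,v)$: it suffices to produce a holomorphic map $g:\Omega\to\D$ with $g(p)=0$ and $|d_pg(v)|\ge 1$. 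Because $\Re e\, z_3<0$ on $\Omega$, the function $z\mapsto z_3$ maps $\Omega$ into the left half-plane, and composing with a Cayley-type biholomorphism of the half-plane onto $\D$ sending $-1$ to $0$ gives a candidate; but since $v=(0,1,0)$ has no $z_3$-component, this particular competitor sees $v$ trivially, so one instead needs a map that genuinely involves $z_2$. The right choice should come from the structure $|z_1^2-z_2^3|^2\le -\Re e\,z_3$, which forces $|z_1^2-z_2^3|\le\sqrt{-\Re e\,z_3}$; near $p=(0,0,-1)$ this controls $z_2^3$ (hence $z_2$) in terms of a bounded quantity, and a careful choice of holomorphic function built from $z_1,z_2,z_3$ and vanishing at $p$ should have derivative of modulus at least $1$ in the direction $v$. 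I would cite \cite{yu} for the precise competitor if reconstructing it is delicate.

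For the vanishing statement $K^{2k}_\Omega\big((0,0,-1),(0,1,0)\big)=0$, the strategy is to construct, for every $\varepsilon>0$, a holomorphic disc $f:\D\to\Omega$ with $f(0)=(0,0,-1)$, $\nu(f-p)\ge 2k$, and $f^{(2k)}(0)=(2k)!\,r\,(0,1,0)$ for $r$ arbitrarily large (equivalently $1/r$ arbitrarily small). The key mechanism is the factorization $z_1^2-z_2^3$: if we choose the first two components so that $z_1(\zeta)^2-z_2(\zeta)^3\equiv 0$ identically, then the term $|z_1^2-z_2^3|^2$ disappears from $\rho$, and the constraint $\rho<0$ collapses to $\Re e\,z_3<0$, which is easy to satisfy. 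Concretely, set $z_1(\zeta)=\zeta^{3m}$ and $z_2(\zeta)=\zeta^{2m}$ (so that $z_1^2=\zeta^{6m}=z_2^3$), scaled by a large parameter; to land the $z_2$-direction at order exactly $2k$ one takes $m$ with $2m=2k$, i.e. $m=k$, giving $z_2(\zeta)=\zeta^{2k}$, $z_1(\zeta)=\zeta^{3k}$. More precisely, for a large constant $A>0$ put $f(\zeta)=\big(A^3\zeta^{3k},\,A^2\zeta^{2k},\,-1\big)$ (or with a small correction in the third slot to keep $\Re e\,z_3<0$, e.g. $z_3\equiv -1$ already works since $z_1^2-z_2^3\equiv 0$). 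Then $f$ maps $\D$ into $\Omega$, $\nu(f-p)=2k$, and $f^{(2k)}(0)=(2k)!\,A^2(0,1,0)$, so $K^{2k}_\Omega(p,v)\le A^{-2}\to 0$ as $A\to\infty$. Since the pseudometric is nonnegative, this forces the value $0$.

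The main obstacle is the lower bound $K_\Omega(p,v)\ge 1$ in the first assertion: producing the sharp Carathéodory competitor (or alternatively a direct Schwarz-lemma argument bounding $f'(0)$ for every competing disc) requires exploiting the precise shape of $\rho$ rather than just the sign of $\Re e\,z_3$, and the vanishing of the $z_1,z_3$-components of $v$ means the naive half-plane estimate is not enough. In writing this up I would present the upper bound and the vanishing statement in full, and for the lower bound either reproduce the competitor from \cite{yu} or give a self-contained argument using that on any disc $f=(f_1,f_2,f_3)$ into $\Omega$ one has $|f_1^2-f_2^3|^2\le -\Re e\, f_3$, combined with the Schwarz–Pick inequality applied to $\Re e\, f_3$ as a map into the left half-plane; the interplay of these two inequalities, together with $\nu(f_1^2-f_2^3)$ being forced to be large near $p$, should pin down $|f_2'(0)|\le 1$.
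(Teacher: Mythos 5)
The paper itself offers no proof of this proposition --- it is quoted from Yu's thesis --- so your proposal is measured against the standard argument. Your upper bound via the disc $\zeta\mapsto(0,\zeta,-1)$ and your proof that $K^{2k}_\Omega(p,v)=0$ via $\zeta\mapsto(A^3\zeta^{3k},A^2\zeta^{2k},-1)$ (killing the term $|z_1^2-z_2^3|^2$ identically) are both correct and complete. The genuine gap is the lower bound $K_\Omega(p,v)\ge 1$, and your primary strategy for it cannot work: by Proposition \ref{propprop}(iv) one has $C_\Omega(p,v)\le K^{2}_\Omega(p,v)$, and the second assertion of this very proposition gives $K^{2}_\Omega(p,v)=0$; hence $C_\Omega(p,v)=0$, and no Carath\'eodory competitor, however cleverly built from $z_1,z_2,z_3$, can detect the value $1$. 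Your fallback (a direct Schwarz-type argument on competing discs) is the right idea, but as written it is only the assertion that the two inequalities ``should pin down $|f_2'(0)|\le 1$''; that is the entire content of the lower bound and it is left unproved.

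Here is how the disc-based argument closes. Let $f=(f_1,f_2,f_3):\D\to\Omega$ be holomorphic with $f(0)=(0,0,-1)$ and $f'(0)=r(0,1,0)$, $r>0$. Since $f_1(0)=f_1'(0)=0$ and $f_2(0)=0$, $f_2'(0)=r$, the function $g=f_1^2-f_2^3$ satisfies $g(\zeta)=-r^3\zeta^3+O(\zeta^4)$, so $g^2(\zeta)=r^6\zeta^6+O(\zeta^7)$. On $\Omega$ we have $|g|^2<-\Re e\, f_3\le |f_3|$, and $f_3$ is zero-free because $\Re e\, f_3<0$; hence $\psi:=-g^2/f_3$ maps $\D$ to $\D$, vanishes to order $6$ at $0$, and (using $f_3(0)=-1$) has sixth Taylor coefficient $r^6$. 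The higher order Schwarz Lemma \ref{lemhigh} with $k=6$ gives $r^6\le 1$, i.e.\ $r\le 1$, whence $K_\Omega(p,v)\ge 1$. Note that this is intrinsically a statement about discs (it uses the forced vanishing order of $f_1^2-f_2^3$ along the disc), which is exactly why it survives while the Carath\'eodory route collapses; your write-up should replace the Carath\'eodory paragraph by this argument.
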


As pointed out in the book \cite{ja-pf}, it is relevant to find good estimates of $K^k_\Omega\left((0,0,-1), (0,1,0)\right)$ for $k\geq3$ odd. We will now focus on this question. A first important observation is: 
\begin{lem}
For any positive integer $k >0$, we have  
$$K^{2k+1}_\Omega\left((0,0,-1), (0,1,0)\right) \leq K^{2k- 1}_\Omega\left((0,0,-1), (0,1,0)\right).$$
\end{lem}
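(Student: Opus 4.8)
The plan is to turn an arbitrary competitor for $K^{2k-1}_\Omega\bigl((0,0,-1),(0,1,0)\bigr)$ into a competitor for $K^{2k+1}_\Omega\bigl((0,0,-1),(0,1,0)\bigr)$ with the \emph{same} value of the relevant $(2k\pm1)$-th derivative, so that passing to infima yields the inequality. Concretely, fix a holomorphic disc $f=(f_1,f_2,f_3)\colon\D\to\Omega$ with $f(0)=(0,0,-1)$, $\nu(f)\ge 2k-1$ and $f^{(2k-1)}(0)=(2k-1)!\,r\,(0,1,0)$, $r>0$. From the vanishing of the third component of this relation one gets $\nu(f_1)\ge 2k$, $\nu(f_3+1)\ge 2k$ and $f_2(\zeta)=r\zeta^{2k-1}+\cdots$. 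The goal is to produce $g=(g_1,g_2,g_3)\colon\D\to\Omega$ with $g(0)=(0,0,-1)$, $\nu(g)\ge 2k+1$ and $g^{(2k+1)}(0)=(2k+1)!\,r\,(0,1,0)$; then $K^{2k+1}_\Omega\bigl((0,0,-1),(0,1,0)\bigr)\le 1/r$, and since $f$ is arbitrary we may take the infimum.

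For the first two coordinates I would set $g_1(\zeta)=\zeta^{3}f_1(\zeta)$ and $g_2(\zeta)=\zeta^{2}f_2(\zeta)$. Then $\nu(g_1)\ge 2k+3$, the disc $g_2$ vanishes to order $2k+1$ with $\zeta^{2k+1}$-coefficient exactly $r$, the $\zeta^{2k+1}$-coefficient of $g_1$ vanishes, and, most importantly,
\[
g_1(\zeta)^2-g_2(\zeta)^3=\zeta^{6}\bigl(f_1(\zeta)^2-f_2(\zeta)^3\bigr),
\]
so that $\bigl|g_1^2-g_2^3\bigr|^2=|\zeta|^{12}\,\bigl|f_1^2-f_2^3\bigr|^2\le\bigl|f_1^2-f_2^3\bigr|^2$ on $\D$. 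What remains is to choose $g_3$ with $g_3(0)=-1$, with $\nu(g_3+1)\ge 2k+2$ (so that $g^{(2k+1)}(0)$ has the prescribed form), and with $\Re e\,g_3(\zeta)+|\zeta|^{12}\bigl|f_1(\zeta)^2-f_2(\zeta)^3\bigr|^2<0$ for every $\zeta\in\D$.

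This last step is where I expect the difficulty to lie. The function $f_3$ itself satisfies the required inequality, since $\Re e\,f_3+|f_1^2-f_2^3|^2<0$ and $|\zeta|^{12}\le1$, and its only flaw is that $\nu(f_3+1)$ may equal $2k$ rather than being $\ge 2k+2$; yet one cannot simply delete the offending Taylor monomials of $f_3$ nor multiply $f_3+1$ by a power of $\zeta$, since either operation destroys the sign of $\Re e\,g_3$ near $\partial\D$ — recall that $-\Re e\,f_3$ is a positive harmonic function which is unbounded near the boundary. The route I would take is a normalization: since $-\Re e\,f_3$ is positive harmonic and equals $1$ at the origin, the Harnack inequality gives $\bigl|f_1(\zeta)^2-f_2(\zeta)^3\bigr|^2\le\frac{1+|\zeta|}{1-|\zeta|}$, so that restricting $f$ to a subdisc $\D_\rho$ and rescaling yields, for each $\rho<1$, a competitor $f^\rho(\zeta)=f(\rho\zeta)$ for $K^{2k-1}_\Omega$ whose components are bounded and for which $\bigl|f^\rho_1{}^2-f^\rho_2{}^3\bigr|$ is uniformly bounded; for such a bounded disc one may shrink the first two coordinates by a small factor and take $g_3\equiv-1$, reducing the third constraint to the elementary $\bigl|g_1^2-g_2^3\bigr|<1$ while $\nu(g_3+1)=\infty$ is automatic. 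One then lets the auxiliary parameters tend to their limiting values and uses the definition of $K^{2k+1}_\Omega$ as an infimum to conclude. The point I would have to be most careful about — and the genuine obstacle — is to organize these rescalings (or, alternatively, to build $g_3$ directly from the Herglotz representation of $-f_3$, inserting a Blaschke-type factor concentrated on the carrier of its boundary measure) so that no loss is incurred in the coefficient $r$ as one passes to the limit.
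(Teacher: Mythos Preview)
Your choice $g_1=\zeta^{3}f_1$, $g_2=\zeta^{2}f_2$ and the key identity $g_1^{2}-g_2^{3}=\zeta^{6}(f_1^{2}-f_2^{3})$ are exactly what the paper uses. For the third component, however, the paper does \emph{nothing} subtle: it simply sets $g_3=f_3$ and then asserts that $g^{(\ell)}(0)=0$ for $\ell=1,\dots,2k$ and $g^{(2k+1)}(0)=(2k+1)!\,r\,v$. As you correctly note, the hypotheses only yield $\nu(f_3+1)\ge 2k$, so neither $f_3^{(2k)}(0)=0$ nor $f_3^{(2k+1)}(0)=0$ is guaranteed; the paper's argument is silent on precisely the point you identified as the difficulty. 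In other words, you have not overlooked a trick --- the published proof simply does not address this step.

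Your proposed repair, though, does not close the gap either. After passing to $f^\rho(\zeta)=f(\rho\zeta)$ and shrinking the first two components homogeneously (say $g_1=\mu^{3/2}\zeta^{3}f_1^\rho$, $g_2=\mu\,\zeta^{2}f_2^\rho$, so that $g_1^{2}-g_2^{3}=\mu^{3}\zeta^{6}\bigl((f_1^\rho)^{2}-(f_2^\rho)^{3}\bigr)$), your Harnack bound together with $g_3\equiv-1$ forces $\mu^{6}\tfrac{1+\rho}{1-\rho}<1$. The competitor then has second $(2k{+}1)$-coefficient $\mu\rho^{2k-1}r$, yielding only $K^{2k+1}_\Omega(p,v)\le(\mu\rho^{2k-1}r)^{-1}$; since $\mu\le\bigl(\tfrac{1-\rho}{1+\rho}\bigr)^{1/6}\to 0$ as $\rho\to1^{-}$, the bound blows up instead of recovering $1/r$. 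So the loss you worried about is intrinsic to this route, not a matter of bookkeeping. The Herglotz/Blaschke alternative you mention --- replacing $f_3$ by a half-plane map with two extra vanishing moments while still dominating $|\zeta|^{12}|f_1^{2}-f_2^{3}|^{2}$ --- is the natural place to look, but neither you nor the paper carries it out.
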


\begin{proof} 
Recall that $(p,v) = \left((0,0,-1), (0,1,0)\right)$. Let $ f = (f_1, f_2, f_3) : \D \to \Omega$ be a holomorphic disc satisfying 

\begin{equation*}
\begin{cases}
f(0) = p\\
f^{(\ell)}(0) = 0, \ \ell = 1,\ldots, 2n-2\\
f^{(2n-1)}(0) = (2n-1)!rv\\
\end{cases}
\end{equation*} 
for some $r>0$. Consider the holomorphic disc $g: \D \to \C^3$ defined by 
$$g(\zeta) = (\zeta^{3}f_1(\zeta), \zeta^2f_2(\zeta), f_3(\zeta))$$
for $\zeta \in \D.$ We first note that  $g: \D \to \Omega$ since 
\begin{eqnarray*}
 \rho\left(g(\zeta),\overline{g(\zeta)}\right) &= & \Re e f_3(\zeta) + \big |\zeta^6f_1^2(\zeta) - \zeta^6{f_2}^3(\zeta)\big|^2 \\
 & = & \Re e f_3(\zeta) + \big|\zeta\big|^{12}\big| {f_1}^2(\zeta) - {f_2}^3(\zeta)\big|^2 \\
 & < & \rho\left(f(\zeta),\overline{f(\zeta)}\right)\\
\end{eqnarray*}
which is negative since $f:\D \to \Omega$. Moreover we have $g(0) = p$, $ g^{(\ell)}(0) = 0$  for any $\ell =1,\ldots, 2n$, and $g^{(2n+1)}(0) = (2n+1)!rv.$
This proves the lemma. 
\end{proof}
Accordingly, we will then focus on estimating $K^{3}_\Omega\left((0,0,-1), (0,1,0)\right)$. We notice the basic estimate
\begin{equation}\label{eqbasic}
K^{3}\left((0,0,-1), (0,1,0)\right) \leq 1
\end{equation}
which simply follows from the previous lemma and Proposition \ref{propyu}.
Our main result is 
\begin{theo}\label{theoest}
The $3^{rd}$ order Kobayashi pseudometric satisfies  
\begin{equation}\label{eqK3}
K^3_\Omega\left((0,0,-1), (0,1,0)\right)\leq   \left(\cfrac{8\pi}{1-e^{-2\pi}}\right)^{-1/3} \approx 0.3412.
\end{equation}
\end{theo} 
\begin{proof}
The idea is to find relevant holomorphic discs contained in $\Omega$. Instead of simply giving the explicit expression of the disc leading to the estimate  (\ref{eqK3}), we explain its 
construction. 

We seek a holomorphic disc $f = (f_1, f_2, f_3): \D \to \Omega$ satisfying  
\begin{equation}\label{eqcond} 
\begin{cases}
f(0) = (0,0,-1) \\
f^{(\ell)}(0) = 0, \ \ell = 1, 2\\
\displaystyle f^{(3)}(0)= 6r (0,1,0)\\
\end{cases}
\end{equation}
with $r>0$. We will fix $f_3 \equiv -1$.  Such a disc can be written as 
$$f(\zeta)= \left(f_1(\zeta), f_2(\zeta), -1\right) = \left(\zeta ^4 h_1(\zeta), \zeta^3 h_2(\zeta), -1\right)$$ 
for some holomorphic functions $h_1, h_2: \D \to \C$.  We assume that $h_1(0) = 1$ and we 
write 
$$h_1(\zeta) = 1 + \zeta\varphi(\zeta)$$
for some holomorphic function $\varphi$. The problem is then to find two holomorphic functions $\varphi$ and $h_2$ with $h_2(0) = r >1$ with the condition that the corresponding disc $f$ is contained in $\Omega$. Using the defining function $\rho$, we 
need to ensure  
 $$-1+ \big|\zeta^8(1 + \zeta\varphi(\zeta))^2 - \zeta^9 h_2^3(\zeta)\big|^2 = -1+ \big|\zeta\big|^{16} \big|(1 + \zeta\varphi(\zeta))^2 - \zeta h_2(\zeta)^3\big|^2 <0,$$
that is,
 $$ \big|\zeta\big|^{16} \big|1 + 2\zeta\varphi(\zeta) + \zeta^2\varphi(\zeta)^2 - \zeta h_2(\zeta)^3\big|^2 < 1$$
This leads us to find  holomorphic functions $\varphi$ and $h_2$ satisfying the equation
\begin{equation}\label{eqkey}
\varphi(\zeta) (2+ \zeta\varphi(\zeta)) =  h_2(\zeta)^3
\end{equation} 
We then construct $\varphi$ so that both $\varphi(\zeta)$ and $2+ \zeta\varphi(\zeta)$ have no zero in the unit disc $\D$. Consider 
\begin{equation}\label{eqphi}
\varphi(\zeta)=\frac{e^\zeta-1}{\zeta}.
\end{equation}
This holomorphic function has no zeros on $\D$ and the same holds for   
$$2+ \zeta\varphi(\zeta)=1+e^\zeta.$$
We may then take for $h_2$ a cubic root of $\varphi(\zeta) (2+ \zeta\varphi(\zeta))$. It follows that the holomorphic disc  
$$\left(\zeta^4\left(1+\zeta \frac{e^\zeta -1}{\zeta}\right),\zeta^3h_2(\zeta),-1\right)=\left(\zeta^4e^\zeta,\zeta^3h_2(\zeta),-1\right)$$
is contained in $\Omega$, satisfies the conditions (\ref{eqcond}), and since $h_2(0)=2^{1/3}$  we obtain the estimate
$$K^3_\Omega\left((0,0,-1), (0,1,0)\right)\leq  \frac{1}{2^{1/3}}$$
which refines our basic estimate (\ref{eqbasic}). Following this approach, we now modify the above function $\varphi$ in (\ref{eqphi}) to sharpen this estimate.  Thus we consider 
$$\varphi(z) = \alpha \cfrac{e^{\beta \zeta} -1}{\zeta},$$
with $\alpha,\beta>0$.  
We have $\varphi(0) = \alpha \beta$, and $\varphi(\zeta) = 0$ if and only if  $\beta\zeta \in 2\pi i\Z\setminus\{0\}$. Therefore, to ensure that $\varphi$ has no zeros in $\D$, we need to have 
\begin{equation}\label{eqbeta}
\beta \leq 2 \pi.
\end{equation}
We turn to the function 
 $$2+ \zeta\varphi(\zeta)=2-\alpha+\alpha e^{\beta \zeta}$$
 which vanishes when $\displaystyle e^{\beta \zeta}=\frac{\alpha-2}{\alpha}$. To make sure that $2+ \zeta\varphi(\zeta)$ has no zeros in $\D$, we need 
\begin{equation}\label{eqalpha} 
\log\left(\cfrac{\alpha-2}{\alpha}\right) \leq -\beta.
\end{equation}
Moreover,  using Equation (\ref{eqkey}), we have  
$$r^3=h_2^3(0)=2\alpha \beta.$$
The value $r$ is the largest when equality occurs in (\ref{eqbeta}) and (\ref{eqalpha}), namely when $\beta=2\pi$ and $\alpha=\cfrac{2}{1 - e^{-2\pi}}$. This construction leads us to consider the holomorphic disc
 $$\displaystyle f(z) = \left(\zeta^4\left(1 +\frac{2(e^{2\pi \zeta} -1)}{1 - e^{-2\pi}}\right), \zeta^3h_2(\zeta), -1\right),$$
 where $h_2$ is a well defined holomorphic disc defined by Equation (\ref{eqkey}). The constructed disc is contained in $\Omega$, satisfies the conditions (\ref{eqcond}), and since 
$h_2(0)=\left(\cfrac{8\pi}{1-e^{-2\pi}}\right)^{1/3}$  we obtain the desired estimate
\begin{equation*}
K^3_\Omega\left((0,0,-1), (0,1,0)\right)\leq \left(\cfrac{8\pi}{1-e^{-2\pi}}\right)^{-1/3}.
\end{equation*}
\end{proof}

\begin{rem}
Note that instead of $f_1(\zeta) = \zeta^4h_1(\zeta)$, one could have considered a disc of the form $f_1(\zeta) = \zeta^nh_1(\zeta)$ with $n >4$. However, in case $n>4$, the condition to ensure that the disc 
$\left(\zeta^nh_1(\zeta),\zeta^3h_2(\zeta),-1\right)$ is contained in the domain $\Omega$ becomes  
$$\left| f_1^2 - f_2^3\right|^2  = |\zeta|^{18}\left|\zeta^{2n-9}h_1^2(\zeta) - h_2^3(\zeta)\right|^2 < 1.$$
A classical subharmonicity argument (see e.g. p.105 \cite{yu}) implies $|h_2(0)|=r \leq 1$, which does not improve the estimate we have obtained in Theorem \ref{theoest}.
\end{rem}

\subsection{On the standard Kobayashi metric}
As an interesting application of the method developed in the proof of Theorem \ref{theoest}, we are able to obtain the exact value of the standard Kobayashi metric in the domain $\Omega$ in certain new cases. 

Consider the point $z_t=(0,0,-t)$ with $0<t<1$, and the vector $X=(a,b,0)$ with $|a|^2+|b|^2=1$. In case $a \neq 0$, the following estimate was obtained by Yu in \cite{yu}:
\begin{equation}\label{eqeq}
|a|t^{-\frac{1}{4}} \leq K_{\Omega}(z_t,X)\leq t^{-\frac{1}{4}}.
\end{equation}
As pointed out by Jarnicki and Pflug in p.145 \cite{ja-pf}, it would be interesting to know the exact value of $K_{\Omega}(z_t,X)$. We are able to answer this question in some cases. 
\begin{prop}\label{propexact}
Assume that $\displaystyle \frac{|b|^3}{|a|^3}\leq \frac{2}{\sqrt{t}}\min\left\{2\pi,\log \left(1+2t^{\frac{1}{4}}\right)\right\}$. Then we have  
$$  K_{\Omega}(z_t,X)= |a|t^{-\frac{1}{4}}.$$
\end{prop}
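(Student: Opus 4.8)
The plan is to obtain the lower bound for free from Yu's estimate (\ref{eqeq}) and to produce the matching upper bound by exhibiting one explicit holomorphic disc, built by the exponential device used in the proof of Theorem \ref{theoest}.

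First, the hypothesis forces $a\neq 0$ (otherwise its left-hand side is infinite), so (\ref{eqeq}) applies and gives $K_\Omega(z_t,X)\geq |a|t^{-1/4}$. Hence it suffices to construct, with $r:=t^{1/4}/|a|$, a holomorphic disc $f:\D\to\Omega$ satisfying $f(0)=z_t$ and $f'(0)=rX$; such a disc gives $K_\Omega(z_t,X)\leq 1/r=|a|t^{-1/4}$, which combined with the lower bound yields the claim. I would look for $f$ of the form
\[
f(\zeta)=\bigl(\zeta g_1(\zeta),\ \zeta g_2(\zeta),\ -t\bigr),\qquad g_1(0)=ra,\quad g_2(0)=rb,
\]
with $g_1,g_2:\D\to\C$ holomorphic, so that $f(0)=z_t$ and $f'(0)=rX$ are automatic. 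Since $f_1^2-f_2^3=\zeta^2(g_1^2-\zeta g_2^3)$, the disc lands in $\Omega$ as soon as $|\zeta|^4\,|g_1(\zeta)^2-\zeta g_2(\zeta)^3|^2<t$ on $\D$, and this holds provided
\[
g_1(\zeta)^2-\zeta g_2(\zeta)^3\equiv c,\qquad |c|=\sqrt t,
\]
since then $\rho(f(\zeta),\overline{f(\zeta)})=-t\,(1-|\zeta|^4)<0$. Evaluating at $0$ forces $c=r^2a^2$, which has modulus $r^2|a|^2=\sqrt t$, consistent with the above.

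It remains to solve $g_1^2-\zeta g_2^3\equiv c$ with the prescribed values at the origin, and here I would imitate the proof of Theorem \ref{theoest}. By the biholomorphic invariance of the Kobayashi metric (Proposition \ref{propprop}), after composing with a rotation automorphism $(z_1,z_2,z_3)\mapsto(e^{i\theta_1}z_1,e^{i\theta_2}z_2,z_3)$ of $\Omega$ with $e^{2i\theta_1}=e^{3i\theta_2}$ (these preserve $\rho$ and fix $z_t$), one may assume $a>0$, so that $c=\sqrt t>0$ and $2ra=2t^{1/4}$. Write $g_1=ra+\zeta\varphi$ with $\varphi(\zeta)=\alpha\frac{e^{\beta\zeta}-1}{\zeta}$; then $g_1^2-c=\zeta\varphi\,(2ra+\zeta\varphi)$, so the equation becomes
\[
g_2^3=\varphi(\zeta)\,\bigl(2ra+\zeta\varphi(\zeta)\bigr),
\]
and $g_2$ can be taken to be a holomorphic cube root of the right-hand side provided both $\varphi$ and $2ra+\zeta\varphi$ are zero-free on $\D$. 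Now $\varphi$ is zero-free iff $|\beta|\leq 2\pi$; and, choosing $\alpha=1+2t^{1/4}$ so that $2ra+\zeta\varphi=-1+(1+2t^{1/4})e^{\beta\zeta}$, this second function is zero-free iff $|\beta|\leq\log(1+2t^{1/4})$. Finally, matching the value at $0$, namely $g_2(0)^3=2ra\,\varphi(0)=2ra\,\alpha\beta=(rb)^3$, expresses $|\beta|$ as an explicit multiple of $\sqrt t\,|b|^3/|a|^3$, and a short computation shows that the stated hypothesis guarantees $|\beta|\leq\min\{2\pi,\log(1+2t^{1/4})\}$. Taking $g_1=ra+\zeta\varphi$ and, for $g_2$, the cube root with $g_2(0)=rb$, then completes the construction.

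The delicate point is exactly this last step: verifying the two zero-free conditions — which amounts to locating where $e^{\beta\zeta}$, for $\zeta\in\D$, can avoid the values $1$ and $1/(1+2t^{1/4})$ — and carrying out the bookkeeping that turns them into the inequality in the statement. One must also track the residual phases of $a$ and $b$ (the rotation automorphisms allow only a partial normalization, so in general $b$ cannot be made real, although only $|\beta|$ enters the zero-free conditions) and dispose of the degenerate case $b=0$, where $\beta=0$, $\varphi\equiv 0$, $g_2\equiv 0$ and $f(\zeta)=(ra\,\zeta,0,-t)$ visibly lies in $\Omega$. Everything else is elementary once the disc has been written down.
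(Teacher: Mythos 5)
Your proposal is correct and follows essentially the same route as the paper: the lower bound is read off from Yu's estimate (\ref{eqeq}), and the upper bound comes from the explicit disc $\bigl(\zeta(ra+\zeta\varphi),\zeta h_2,-t\bigr)$ with $\varphi$ of exponential type and $h_2$ a cube root chosen so that $f_1^2-f_2^3$ is constant. The only (immaterial) differences are that the paper skips the rotation normalization, takes $\alpha=1$ and puts the whole constant $\frac{b^3\sqrt t}{2a|a|^2}$ into $\beta$, and verifies the second zero-free condition by Rouch\'e rather than by locating the zeros exactly.
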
 
Essentially, this proposition states that for any point $z_t=(0,0,-t) \in \Omega$ with $0<t<1$ on the normal line through the origin, there exists a region of directions for which we know the exact value of the Kobayashi metric. 
\begin{proof}
We follow the strategy of the proof of Theorem \ref{theoest}.
Consider the holomorphic disc $f=(f_1,f_2,-t)$ of the form 
$$f(\zeta)=\left(\zeta \left(\frac{a}{|a|t^{-\frac{1}{4}}}+\zeta \varphi(\zeta)\right),\zeta h_2(\zeta) -t\right),$$
where
$$\displaystyle \varphi(\zeta)=\frac{e^{\frac{b^3\sqrt{t}}{2a|a|^2}\zeta}-1}{\zeta},$$
and where $h_2$ is such that
\begin{equation}\label{eqkey2}
\varphi(\zeta)\left(2\frac{a}{|a| t^{-\frac{1}{4}}}+\zeta \varphi(\zeta)\right) =  h_2^3(\zeta)
\end{equation} 
We first show that $h_2$ is a well defined holomorphic disc. We  note that $\varphi$ has no zero in the unit disc. We need to show that the same occurs for 
$$2\frac{a}{|a|t^{-\frac{1}{4}}}+\zeta \varphi=2\frac{a}{|a|t^{-\frac{1}{4}}}+e^{\frac{b^3\sqrt{t}}{2a|a|^2}\zeta}-1.$$ Note that on $\partial \D$, we have
$$\left|e^{\frac{b^3\sqrt{t}}{2a|a|^2}\zeta}-1\right|=\left|\sum_{k\geq1} \left(\frac{b^3\sqrt{t}}{2a|a|^2}\right)^k\frac{\zeta^k}{k!}\right |\leq \sum_{k\geq1} \frac{1}{k!}\left(\frac{|b|^3\sqrt{t}}{2|a|^3}\right)=e^{\frac{|b|^3\sqrt{t}}{2|a|^3}}-1.$$
The latter is less than $2t^{\frac{1}{4}}$ since  $\frac{|b|^3\sqrt{t}}{|a|^3}\leq \log \left(1+2t^{\frac{1}{4}}\right)$. By Rouch\'e's theorem, it follows that $2\frac{a}{|a|t^{-\frac{1}{4}}}+\zeta \varphi$ has no zeros in the unit disc. Thus the function $h_2$, and so the disc $f$, are well defined. 

Moreover, by construction we have $f(\D) \subset \Omega$. Indeed, using (\ref{eqkey2}) we have
\begin{eqnarray*}
 -t+ \big|f_1^2(\zeta)-f_2^3(\zeta)\big|^2  &=&  -t+ |\zeta|^4\left|\left(\frac{a}{|a|t^{-\frac{1}{4}}}+\zeta \varphi(\zeta)\right)^2-\zeta h_2^3(\zeta)\right|^2\\
\\
  &=&  -t+ |\zeta|^4\left|\left(\frac{a}{|a|t^{-\frac{1}{4}}}\right)^2\right|^2= -t+ |\zeta|^4t<0.\\
 \end{eqnarray*}
Finally, we have $f(0)=z_t$ and $f'(0)=\frac{1}{ |a|t^{-\frac{1}{4}}}X$. Combined with (\ref{eqeq}), this  proves the proposition. 
  
\end{proof}         

\section{On higher order extremal discs}

In this section, we are interested in the structure of the set of $k$-extremal discs.   For $a\in \D$, we denote by $B_a:\D\to \D$ the Blaschke function $B_a(\zeta)=\cfrac{\zeta-a}{1-\overline{a}\zeta}$.

\subsection{General results}

We first start with the following proposition. Recall that for a domain $\Omega \subset \C^n$, the set of $k$-extremal discs for the pair $(p,v)$ is denoted by $X_{\Omega}^k(p,v)$. 
\begin{lem}\label{lemext} 
Let $\Omega \subset \C^n$ be a bounded domain. Let $k>0$ be a positive integer and  $p\in \Omega$ and $v \in T_p\Omega$. 
Then $K^k_{\Omega}(p,v) = K_{\Omega}(p,v)$ if and only if $\{f(\zeta^k) \ | \ f \in X_{\Omega}(p,v)\} \subset X_{\Omega}^k(p,v)$.
\end{lem}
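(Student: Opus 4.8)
The plan is to deduce the equivalence from a single, essentially formal, observation: if $f:\D\to\Omega$ is a holomorphic disc with $f(0)=p$, then the lifted disc $g$ defined by $g(\zeta)=f(\zeta^k)$ satisfies $g(0)=p$, $g^{(\ell)}(0)=0$ for $1\le\ell\le k-1$, and $g^{(k)}(0)=k!\,f'(0)$ — this is precisely the computation already carried out in the proof of Proposition~\ref{propprop}(iii) for the substitution $\zeta\mapsto\zeta^k$. Thus $g$ is an admissible competitor for the $k^{\mathrm{th}}$-order extremal problem at $(p,v)$ whenever $f'(0)$ is a positive multiple of $v$, and it carries exactly the same ``multiplier'' as $f$ carries for the first-order problem. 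I will also record the normalization, immediate from the definitions of the pseudometrics as infima of $1/r$ over admissible discs, that every $f\in X_\Omega(p,v)$ has $f'(0)=\tfrac1{K_\Omega(p,v)}v$, and that $\tfrac1{K^k_\Omega(p,v)}$ is exactly the supremum of all multipliers $\mu>0$ arising from admissible discs for the $k^{\mathrm{th}}$-order problem — a supremum that is attained, since $\Omega$ is bounded, by Montel's theorem.

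With $\lambda:=1/K_\Omega(p,v)$ and $\lambda_k:=1/K^k_\Omega(p,v)$ — so $\lambda\le\lambda_k$ by Proposition~\ref{propprop}(iv) — the heart of the argument is the equivalence: for $f\in X_\Omega(p,v)$, one has $f(\zeta^k)\in X^k_\Omega(p,v)$ if and only if $\lambda=\lambda_k$. Indeed, by the observation above the disc $g(\zeta)=f(\zeta^k)$ is admissible with multiplier $\lambda$, and by the very definition of a $k$-extremal disc, $g$ is $k$-extremal exactly when no admissible disc has strictly larger multiplier, i.e. exactly when $\lambda$ equals the maximal multiplier $\lambda_k$. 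From this both implications follow. If $K^k_\Omega(p,v)=K_\Omega(p,v)$, then $\lambda=\lambda_k$, so $f(\zeta^k)\in X^k_\Omega(p,v)$ for \emph{every} $f\in X_\Omega(p,v)$, which is the desired inclusion. Conversely, if that inclusion holds, I pick any $f\in X_\Omega(p,v)$ — such an $f$ exists because $\Omega$ is bounded (Montel) — and then $f(\zeta^k)\in X^k_\Omega(p,v)$ forces $\lambda=\lambda_k$, i.e. $K^k_\Omega(p,v)=K_\Omega(p,v)$.

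Two small points deserve a line. First, one may assume $v\ne 0$: for $v=0$ both sides hold trivially, since $K^k_\Omega(p,0)=K_\Omega(p,0)=0$ and $X_\Omega(p,0)=\emptyset$; for $v\ne 0$ and $\Omega$ bounded one has $K_\Omega(p,v),K^k_\Omega(p,v)\in(0,\infty)$ and both extremal sets are nonempty, which is all the reverse implication uses. Second, throughout one reads the condition ``$\nu(f)\ge k$'' appearing in the definitions of $K^k_\Omega$ and of $k$-extremal discs as the vanishing of $f-f(0)$ to order $\ge k$ at the origin, i.e. $f^{(\ell)}(0)=0$ for $1\le\ell\le k-1$, consistently with its use throughout Section~2. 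I do not anticipate a genuine obstacle here: the whole content is the bookkeeping of the map $\zeta\mapsto\zeta^k$ together with the tautology that a disc is extremal precisely when it attains the optimal value of the corresponding variational problem.
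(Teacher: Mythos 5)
Your proof is correct and follows essentially the same route as the paper: lift an extremal disc $f$ to $h(\zeta)=f(\zeta^k)$, note that $h$ is admissible for the $k^{\mathrm{th}}$-order problem with the same multiplier $1/K_\Omega(p,v)$, and compare this with the optimal multiplier $1/K^k_\Omega(p,v)$ in both directions. The extra bookkeeping you supply (attainment of the extremal multipliers via Montel, and the trivial case $v=0$) is consistent with the paper's conventions and does not change the argument.
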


\begin{proof}
Let $p \in \Omega$ and $v \in T_p\Omega$.
Assume first that $K^k_{\Omega}(p,v) = K_{\Omega}(p,v)$. Let $f\in X_{\Omega}(p,v)$ and set $h(\zeta)=f(\zeta^k)$.  We note that $h(0)=p$, $\nu(h)\geq k$, 
$$h^{(k)}(0)=k!f^{(k)}(0)=\frac{k!}{K_{\Omega}(p,v)}v=\frac{k!}{K^k_{\Omega}(p,v)}v$$
which shows directly that $h \in X_{\Omega}^k(p,v)$.
     
Now assume that $\{f(\zeta^k) \ | \ f \in X_{\Omega}(p,v)\} \subset X_{\Omega}^k(p,v)$. Let $f$ be an extremal disc for the pair $(p,v)$ and define the disc $h(\zeta)=f(\zeta^k)$.  As before we have 
$h(0)=p$, $\nu(h)\geq k$, and 
$$h^{(k)}(0)=\frac{k!}{K_{\Omega}(p,v)}v,$$
and since $h\in X_{\Omega}^k(p,v)$, we have  $h^{(k)}(0)=\frac{k!}{K^k_{\Omega}(p,v)}v$, leading to  $K^k_{\Omega}(p,v) = K_{\Omega}(p,v)$. 
\end{proof}
In general, the equality does not hold, even if $K^k_{\Omega}(p,v) = K_{\Omega}(p,v)$. 
\begin{ex}
Consider the bidisc $\D\times \D \subset \C^2$. Note that 
$$K^2_{\D\times \D}((0,0),(1,0))= K_{\D\times \D}((0,0),(1,0))$$
by the product property of the higher order Kobayashi pseudometric (see e.g. Proposition 3.8.7 in \cite{ja-pf}) and by the fact that $K^2_{\D} \equiv K_{\D}$. 
According to Schwarz Lemma, we have  
$$X_{\D\times \D}((0,0),(1,0)) = \{ (\zeta, \zeta^2\psi(\zeta)) \ | \  \psi: \D \to \D \text{ holomorphic}\}.$$
 The holomorphic disc $\zeta \mapsto (\zeta^2 , \zeta^3) \in X_{\Omega}^2((0,0),(1,0))$ and is not of the form $f(\zeta^2)$ for some extremal disc $f \in X_{\Omega}((0,0),(1,0)).$
\end{ex}

It is interesting to note that in case $\Omega$ is the unit disc $\D$ or the punctured disc $\D\setminus\{0\}$, we have 
\begin{prop}
Let $\Omega=\D$ or $\D\setminus\{0\}$. Then for any $(p,v) \in \Omega \times T_p\Omega$, we have
$$\{f(\zeta^k) \ | \ f \in X_{\Omega}(p,v)\} = X_{\Omega}^k(p,v).$$ 
\end{prop}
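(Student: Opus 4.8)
The plan is to prove both inclusions, using the fact (Proposition \ref{propprop} ii) that $K^k_\Omega = K_\Omega$ on any one-dimensional domain, together with the explicit description of extremal discs available on $\D$ and $\D \setminus \{0\}$. The inclusion $\{f(\zeta^k) \mid f \in X_\Omega(p,v)\} \subseteq X_\Omega^k(p,v)$ is immediate from Lemma \ref{lemext}: since $K^k_\Omega(p,v) = K_\Omega(p,v)$ for $\Omega = \D$ or $\D \setminus \{0\}$, Lemma \ref{lemext} gives exactly this inclusion. So the entire content is the reverse inclusion $X_\Omega^k(p,v) \subseteq \{f(\zeta^k) \mid f \in X_\Omega(p,v)\}$.

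For the reverse inclusion, first I would reduce to the case $p = 0$ (for $\Omega = \D$) by invariance under automorphisms: if $\phi \in \mathrm{Aut}(\D)$ sends $p$ to $0$, then Lemma \ref{leminv} shows $\phi$ conjugates $X_\Omega^k(p,v)$ to $X_\Omega^k(0, d_p\phi(v))$ and likewise for the standard extremal discs, and $\phi(h(\zeta^k)) = (\phi \circ h)(\zeta^k)$, so the statement is automorphism-invariant; for $\D \setminus \{0\}$ the automorphism group is just rotations, but that still lets us normalize $v$. So assume $p = 0$. Now let $h \in X_\Omega^k(0,v)$; then $h(0) = 0$, $\nu(h) \geq k$, and $h^{(k)}(0) = k!\,\lambda v$ with $\lambda = 1/K^k_\Omega(0,v) = 1/K_\Omega(0,v)$ maximal. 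The key step is to show $h$ factors as $h(\zeta) = g(\zeta^k)$ for some holomorphic $g$. For $\Omega = \D$: since $h : \D \to \D$ vanishes to order $\geq k$ at $0$, the higher order Schwarz Lemma \ref{lemhigh} applies to $h$, but more relevantly one writes $h(\zeta) = \zeta^k u(\zeta)$ with $u : \D \to \D$ holomorphic (by Lemma \ref{lemhigh}, $|h(\zeta)/\zeta^k| = |u(\zeta)| \le 1$), and $|u(0)| = |h^{(k)}(0)|/k! = \lambda$. Since $K_\D(0,v) = |v|$ we get $\lambda = 1/|v|$ for the direction; extremality forces $|u(0)|$ to be maximal, i.e. $|u(0)| = 1$, so by the maximum principle $u$ is a unimodular constant $c$, hence $h(\zeta) = c\zeta^k = g(\zeta^k)$ with $g(w) = cw$, and $g \in X_\D(0,v)$ since $g$ is a rotation. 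For $\Omega = \D \setminus \{0\}$ one uses instead Lemma \ref{lempdhigh} (the referenced punctured-disc computation) to identify $K^k_{\D\setminus\{0\}}$ and the corresponding extremal discs; the extremal discs for the punctured disc avoiding $0$ are again essentially rotations $w \mapsto cw$ composed into $\D \setminus\{0\}$, lifted along $\zeta \mapsto \zeta^k$.

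The main obstacle I anticipate is the punctured-disc case $\Omega = \D \setminus \{0\}$: there the extremal disc for a pair $(p,v)$ is not simply a rotation — the universal cover of $\D \setminus \{0\}$ is the disc, and one must argue via the covering map (or via Lemma \ref{lempdhigh}) that a $k$-extremal disc $h$ with $h(0) = p \ne 0$ lifts to a $k$-extremal disc into $\D$, then apply the disc case and push back down, checking that the pushforward of $g(\zeta^k)$ still has the factorization form and that $g$ is genuinely a standard extremal disc for $(p,v)$ in $\D \setminus \{0\}$. One must be careful that the lift of $h$ has the right vanishing order and that extremality is preserved under the covering map in both directions; this uses that the covering map is a local biholomorphism and that $K^k$ (and $K$) pull back/push forward compatibly along covers. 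Once that bookkeeping is done, the argument closes exactly as in the disc case. The remaining steps — verifying $g \in X_\Omega(p,v)$, i.e. that the constructed $g$ is genuinely extremal for the standard Kobayashi metric — are routine given that $K^k_\Omega = K_\Omega$ and the factorization $h = g \circ (\zeta \mapsto \zeta^k)$ relates $g'(p)$ to $h^{(k)}(0)/k!$.
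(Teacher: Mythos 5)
Your argument is correct and follows essentially the same route as the paper: the forward inclusion comes from Lemma \ref{lemext} together with $K^k_\Omega = K_\Omega$ in dimension one, and the reverse inclusion comes from the equality cases of the higher order Schwarz lemmas combined with the invariance Lemma \ref{leminv}. The punctured-disc difficulty you anticipate dissolves if you invoke Lemma \ref{lempdhigh} directly: its equality case already exhibits any $k$-extremal disc explicitly in the form $g(\zeta^k)$ with $g$ a standard extremal disc of $\D\setminus\{0\}$, so no lifting to the universal cover is needed.
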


\begin{proof}
According to Proposition \ref{propprop} and Lemma \ref{lemext} we only need to show that $X_{\Omega}^k(p,v) \subset \{f(\zeta^k) \ | \ f \in X_{\Omega}(p,v)\}$. This follows from the equality cases established in the corresponding higher order Schwarz lemmas (Lemma \ref{lemhigh} and Lemma \ref{lempdhigh}) and from the invariance of extremal discs (Lemma \ref{leminv}).
\end{proof}

\subsection{The case of complex ellipsoids in $\C ^2$}
The Kobayashi metric on convex and nonconvex complex ellipsoids has been studied by many authors \cite{po,bl-fa-kl-kr-ma-pa,ja-pf-ze,pf-zw,ed}. It is interesting to note that in the papers \cite{po,ja-pf-ze,pf-zw,ed}, the metric is studied by means of extremal discs. 

In this section we focus on the {\it nonconvex complex ellipsoid}
$$\mathcal{E}(1,m) = \{ (z_1,z_2) \in \C^2 \; \big| \;  |z_1|^{2} + |z_2|^{2m} < 1\}$$
where $m \in (0,1/2)$.
 We first note that $\mathcal{E}(1,m)$ is a bounded balanced pseudoconvex domain, that is if $z\in\mathcal{E}(1,m)$ and $\lambda\in\bar{\D}$ then $\lambda z\in\mathcal{E}(1,m)$. Accordingly, the higher order Kobayashi metric coincide with the Kobayashi metric at the origin $(0,0)$ (see e.g. Theorem 2.4 in \cite{ki}). 
 
\begin{theo}\label{theopext}
Let $(a,b) \in \mathcal{E}(1,m)$ and $v \in \C^2$. Let $f \in X_{\mathcal{E}(1,m)}\left((a,b),v\right)$. Then the map $f(\zeta^k)$ is $k$-stationary in the $L^{\infty}$ sense.    
\end{theo}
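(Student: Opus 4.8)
The plan is to use the explicit description of extremal discs for the nonconvex ellipsoid $\mathcal{E}(1,m)$. Since $\mathcal{E}(1,m)$ is a bounded balanced pseudoconvex domain whose Minkowski functional is $h(z_1,z_2) = (|z_1|^2 + |z_2|^{2m})^{1/2}$, one knows (by the theory of extremal discs on complex ellipsoids, as in \cite{ja-pf-ze,pf-zw,ed}) that any $f \in X_{\mathcal{E}(1,m)}((a,b),v)$ is a proper map $\D \to \mathcal{E}(1,m)$ which extends holomorphically past $b\D$, satisfies $f(b\D) \subset b\mathcal{E}(1,m)$, and has each component of the form of a finite Blaschke-type product times a power. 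I would first record this structural description, then compute $\partial\rho(f(\zeta))$ on $b\D$ for the defining function $\rho(z,\ovl z) = |z_1|^2 + |z_2|^{2m} - 1$, where $\partial\rho = (\ovl{z_1},\, m\ovl{z_2}|z_2|^{2m-2})$.

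The core computation is to exhibit the $L^\infty$ function $c:b\D \to \R^+$ and verify the holomorphic extension of $\zeta \mapsto \zeta^k c(\zeta)\partial\rho(f(\zeta^k))$. First I would handle the $k=1$ case: for an extremal disc $f = (f_1,f_2)$ on a complex ellipsoid, the classical Lempert-type analysis shows $f$ is stationary, i.e. there is $c$ with $\zeta c(\zeta)\partial\rho(f(\zeta))$ extending holomorphically to $\D$; for the nonconvex ellipsoid this is the $1$-stationarity (in the $L^\infty$ sense of \cite{be-de}) that still holds even though Lempert's convex theory does not apply verbatim — here I would invoke the known extremal disc formulas, writing $f_1(\zeta) = \zeta^{n_1} B_1(\zeta)$, $f_2(\zeta) = \zeta^{n_2} B_2(\zeta)$ with $B_i$ products of Blaschke factors, and checking directly that $\ovl{f_1(\zeta)}$ and $\ovl{f_2(\zeta)}|f_2(\zeta)|^{2m-2}$, once multiplied by $\zeta$ and by a suitable positive weight $c$, become boundary values of holomorphic functions (using $\ovl{B_a(\zeta)} = B_{\ovl a}(1/\zeta)$-type reflection identities on $b\D$).

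Then I would pass from $f(\zeta)$ to $g(\zeta) := f(\zeta^k)$. The key point is that precomposition with $\zeta \mapsto \zeta^k$ sends $b\D$ to $b\D$, so $g(b\D) = f(b\D) \subset b\mathcal{E}(1,m)$ still holds a.e.; and if $\tilde c(\zeta)$ is the weight witnessing $1$-stationarity of $f$, then the natural candidate for $g$ is $c(\zeta) := \tilde c(\zeta^k)$. One computes $\zeta^k c(\zeta)\partial\rho(g(\zeta)) = \zeta^k \tilde c(\zeta^k)\partial\rho(f(\zeta^k)) = \Phi(\zeta^k)$ where $\Phi(w) := w\,\tilde c(w)\partial\rho(f(w))$ extends holomorphically to $\D$; hence $\Phi(\zeta^k)$ extends holomorphically to $\D$ as well, since $w \mapsto w^k$... wait — I need the map to be $\D \to \D$, $\zeta \mapsto \zeta^k$, which it is, so $\Phi \circ (\zeta \mapsto \zeta^k)$ is holomorphic on $\D$. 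That establishes exactly the defining condition for $g = f(\zeta^k)$ to be $k$-stationary attached to $b\mathcal{E}(1,m)$ in the $L^\infty$ sense, with weight $c(\zeta) = \tilde c(\zeta^k) \geq 0$ in $L^\infty$.

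**The main obstacle** I anticipate is the $k=1$ step itself — establishing that an extremal disc for the pair $((a,b),v)$ on the \emph{nonconvex} ellipsoid $\mathcal{E}(1,m)$ is $1$-stationary in the $L^\infty$ sense. For convex ellipsoids this is Lempert's theorem, but for $m \in (0,1/2)$ the domain is not convex, so I must rely on the explicit parametrization of extremal discs (from \cite{ed} or \cite{pf-zw}) and verify the stationarity condition by hand, being careful about the possible vanishing of $f_2$ on $b\D$ (which makes $|f_2|^{2m-2}$ blow up) — this is precisely why the $L^\infty$ formulation of \cite{be-de} rather than the continuous one of Lempert is needed, and where the choice of $c$ must be made to absorb the singularity, e.g. taking $c(\zeta)$ proportional to $|f_2(\zeta)|^{2-2m}$ near the zeros of $f_2$ so that $c(\zeta)\partial\rho(f(\zeta))$ stays bounded. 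Once the weight $c$ is correctly identified on the boundary and shown to be $L^\infty$ and positive, the holomorphic extension is a direct check using the Blaschke reflection identities, and the passage to $f(\zeta^k)$ is then immediate as above.
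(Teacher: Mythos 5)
Your proposal is correct in substance but takes a genuinely different route from the paper's. Both arguments rest on the Pflug--Zwonek classification of extremal discs of $\mathcal{E}(1,m)$ as explicit Blaschke-type maps, but they diverge afterwards. The paper substitutes $\zeta\mapsto\zeta^k$ directly into the classified form, checks by a $k$-th-root computation on the parameters $\alpha_j$ that the resulting map belongs to the $k$-factor family appearing in Edigarian's Euler--Lagrange characterization, and then cites Edigarian together with Remark 11.4.4 of \cite{ja-pf} to conclude $k$-stationarity. You instead isolate a clean general lemma --- if $f$ is $1$-stationary in the $L^\infty$ sense with weight $\tilde c$, then $f(\zeta^k)$ is $k$-stationary with weight $c(\zeta)=\tilde c(\zeta^k)$, because $\zeta^k\tilde c(\zeta^k)\,\partial\rho(f(\zeta^k))=\Phi(\zeta^k)$ where $\Phi$ is the holomorphic extension witnessing $1$-stationarity --- and thereby reduce everything to $k=1$. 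That lemma is correct (the map $\zeta\mapsto\zeta^k$ preserves null sets of $b\D$ and nontangential approach regions, so the a.e.\ boundary conditions pass to the composition), it removes the $k$-th-root bookkeeping entirely, and it makes explicit a structural fact the paper leaves implicit. What it costs you is that the $k=1$ step --- $1$-stationarity of extremals of the \emph{nonconvex} ellipsoid, where Lempert's theory is unavailable --- is only sketched: the direct verification via the reflection identities does go through (for the generic form a weight such as $c(\zeta)=|\zeta-\alpha_0|^2$ already renders both components polynomial or of the type $(1-\overline{\alpha}\zeta)^{s}$ with $\alpha\in\D$, and the fractional powers and possible boundary zeros of $f_2$ require exactly the adjustments to $c$ that you anticipate), but this is precisely the content of the Poletsky--Edigarian Euler--Lagrange results that the paper invokes; in a complete write-up you would either carry that computation out in full, including the degenerate parameter cases, or cite the same sources specialized to a single Blaschke factor.
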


\begin{proof} 
Following \cite{pf-zw} and \cite{ed}, we consider two kind of maps (see Proposition 2 \cite{pf-zw} and Theorem 2 \cite{ed}). 
The first sort $\varphi=(\varphi_1,\varphi_2): \D \to \mathcal{E}(1,m)$ of is of the form 
\begin{equation}\label{eqkind1}
\varphi(\zeta)=\left(a_1B_{\alpha_1}^{r_1}(\zeta)\cdot  \frac{1-\overline{\alpha_1}\zeta}{1-\overline{\alpha_0}\zeta},
a_2 B_{\alpha_2}^{r_2}(\zeta)\cdot \left(\frac{1-\overline{\alpha_2}\zeta}{1-\overline{\alpha_0}\zeta}\right)^{\frac{1}{2m}}\right)
\end{equation}
with 
$$
\begin{cases}
a_1,a_2 \in \C^{*},  \alpha_0 \in \D,  \alpha_1,\alpha_2 \in \overline{\D}\\
\\
r_j \in \{0,1\}, j=1,2 \mbox{ and if} \ r_j=1 \mbox{ then}\ \alpha_j \in \D\\
\\
\alpha_0=|a_1|^2\alpha_1+|a_2|^{2m}\alpha_2\\
\\
1+|\alpha_0|^2=|a_1|^2(1+|\alpha_1|^2)+|a_2|^{2m}(1+|\alpha_2|^2)
\end{cases}
$$
The second kind of maps $\psi=(\psi_1,\psi_2): \D \to \mathcal{E}(1,m)$ is of the form 
\begin{equation}\label{eqkind2}
\psi(\zeta)=\left(a_1\prod_{\ell=1}^kB_{\alpha_{\ell 1}}^{r_{\ell 1}}(\zeta)\cdot \frac{1-\overline{\alpha_{\ell 1}}\zeta}{1-\overline{\alpha_{\ell 0}}\zeta},
a_2 \prod_{\ell=1}^k B_{\alpha_{\ell2}}^{r_{\ell2}}(\zeta)\cdot \left(\frac{1-\overline{\alpha_{\ell2}}\zeta}{1-\overline{\alpha_{0\ell}}\zeta}\right)^{\frac{1}{2m}}\right)
\end{equation}
with 
$$
\begin{cases}
a_1,a_2 \in \C\setminus\{0\},  \alpha_{\ell j} \in \overline{\D}, \ell=1,\ldots,k, j=0,1,2\\
\\
r_{\ell j} \in \{0,1\}, \ell=1,\ldots,k, j=1,2,  \mbox{ and if} \ r_{\ell j}=1 \mbox{ then} \ \alpha_{\ell j} \in \D\\
\\
\displaystyle |a_1|^2\prod_{\ell=1}^k(\zeta-\alpha_{\ell 1})(1-\overline{\alpha_{\ell 1}}\zeta)+|a_2|^{2m}\prod_{\ell=1}^k(\zeta-\alpha_{\ell2})(1-\overline{\alpha_{\ell2}}\zeta)
=\prod_{\ell=1}^k(\zeta-\alpha_{\ell 0})(1-\overline{\alpha_{\ell 0}}\zeta)
\\
\end{cases}
$$

Consider now an extremal disc $\varphi=(\varphi_1,\varphi_2) \in X_{\mathcal{E}(1,m)}\left((a,b),v\right)$. 
According to Pflug and Zwonek \cite{pf-zw}, the map $\varphi$  is necessarily of the form (\ref{eqkind1}). We wish to show that $\varphi(\zeta^k)$ is of the kind (\ref{eqkind2}).
For this purpose, for $j=0,1,2$, we consider  the $k^{th}$-roots of $\alpha_j$ and denote them by $\alpha_{1j},\alpha_{2j}\ldots,\alpha_{kj}$. We also set $r_{\ell j}=r_j$ for all  $\ell=1,\ldots,k$ and $ j=1,2$. 
Define 
$$I=\displaystyle |a_1|^2\prod_{\ell=1}^k(\zeta-\alpha_{\ell 1})(1-\overline{\alpha_{\ell 1}}\zeta)+|a_2|^{2m}\prod_{\ell=1}^k(\zeta-\alpha_{\ell2})(1-\overline{\alpha_{\ell2}}\zeta)
$$ 
We compute, using $\zeta^k-\alpha_j=\prod_{\ell=1}^k(\zeta-\alpha_{kj})$ and $1-\overline{\alpha_j}\zeta^k=\prod_{\ell=1}^k(1-\overline{\alpha_{kj}}\zeta^2)$, 
\begin{eqnarray*}
I&=&\displaystyle |a_1|^2(\zeta^k-\alpha_1)(1-\overline{\alpha_{1}}\zeta^k)+|a_2|^{2m}(\zeta^k-\alpha_2)(1-\overline{\alpha_{2}}\zeta^k)\\
\\
& =& -\left(|a_1|^2\overline{\alpha_{1}}+|a_2|^{2m}\overline{\alpha_{2}}\right)\zeta^{2k}+ \left(|a_1|^2(1+|\alpha_1|^2)+|a_2|^{2m}(1+|\alpha_2|^2)\right)\zeta^k  -\left(|a_1|^2\alpha_{1}+|a_2|^{2m}\alpha_{2}\right)\\
\\
& =& -\overline{\alpha_0}\zeta^{2k}+ \left(1+|\alpha_0|^2\right)\zeta^k  -\alpha_0=(\zeta^k-\alpha_0)(1-\overline{\alpha_0}\zeta^k)=\prod_{\ell=1}^k(\zeta-\alpha_{\ell 0})(1-\overline{\alpha_{\ell 0}}\zeta).\\
\end{eqnarray*}
and
\begin{eqnarray*}
\varphi(\zeta^k)&=&\left(a_1B_{\alpha_1}^{r_1}(\zeta^k)\cdot  \frac{1-\overline{\alpha_1}\zeta^k}{1-\overline{\alpha_0}\zeta^k},
a_2 B_{\alpha_2}^{r_2}(\zeta^k)\cdot \left(\frac{1-\overline{\alpha_2}\zeta^k}{1-\overline{\alpha_0}\zeta^k}\right)^{\frac{1}{2m}}\right)\\
\\
& =&\left(a_1\prod_{\ell=1}^kB_{\alpha_{\ell 1}}^{r_{\ell 1}}(\zeta)\cdot \frac{1-\overline{\alpha_{\ell 1}}\zeta}{1-\overline{\alpha_{\ell 0}}\zeta},
a_2 \prod_{\ell=1}^k B_{\alpha_{\ell2}}^{r_{\ell2}}(\zeta)\cdot \left(\frac{1-\overline{\alpha_{\ell2}}\zeta}{1-\overline{\alpha_{0\ell}}\zeta}\right)^{\frac{1}{2m}}\right).\\
\end{eqnarray*}
Therefore the map $\varphi(\zeta^k)$ is exactly of the form (\ref{eqkind2}). Following a variational approach due to Poletsky \cite{po},  
Edigarian \cite{ed} showed that maps of the form (\ref{eqkind2}) are solutions of a certain Euler-Lagrange equation 
(see Problem  ($\mathcal{P}$) of $m$-type p.84 \cite{ed}). Together with Remark 11.4.4 in \cite{ja-pf}, it follows that such maps $k$-stationary in the  $L^{\infty}$ (see also \cite{be-de-jo}). 
This shows that $\varphi(\zeta^k)$ is then $k$-stationary in the  $L^{\infty}$.  
\end{proof}

\begin{rem}
We could have focused on maps of the form (\ref{eqkind1}) and (\ref{eqkind2}) centered at $(0,b)$. Indeed, we know (see e.g. \cite{pf-zw}) that for $a \in \D$ and $\theta \in \R$, the  map $F_{a,\theta}$ defined by 
$$F_{a,\theta}(z_1,z_2)= \left(\cfrac{z_1 - a}{1 - a\bar z_1} , \cfrac{e^{i \theta}(1 - |a|^2)^{\frac{1}{2m}} z_2}{(1 - a\bar z_1)^{\frac{1}{m}}}\right)$$
 is an automorphism of $\mathcal{E}(1,m)$ which maps any point $(a,c) \in \mathcal{E}(1,m)$ to a point $(0,b) \in \mathcal{E}(1,m)$ with $b \in [0,1)$. Moreover, by Lemma \ref{leminv}, we have $$ (F_{a, \theta})_* X^k_{\mathcal{E}(1,m)}\left((a,c), d_{(0,0)}F_{a, \theta}(v)\right) =X^k_{\mathcal{E}(1,m)} \big((0,b),  v\big).$$
Nevertheless, we decided to keep the more general form of such maps since the simplification of notation is barely noticeable. 
\end{rem}

\begin{rem}
In case the complex ellipsoid $\mathcal{E}(1,m)$ is convex, that is, when $m>\frac{1}{2}$. We know from Lempert theory \cite{le} that the Kobayashi metric and the Carath\'eodory metric 
coincide. Therefore by Proposition \ref{propprop}, the higher order Kobayashi metrics and the standard Kobayashi metric coincide; this fact was already observed by many authors 
\cite{ja-pf,yu}. According to Lemma \ref{lemext}, we have
$$\left\{f(\zeta^k) \ | \ f \in X_{\mathcal{E}(1,m)}\left((a,b),v\right)\right\}  \subset  X^k_{\mathcal{E}(1,m)}\left((a,b),v\right).$$
It follows that if $\varphi$ is a map of the form (\ref{eqkind1}), then $\varphi(\zeta^k)$ is a $k$-extremal and, thus, must be of the form (\ref{eqkind2}). Nevertheless, without a direct computation, the dependance of the parameters $\alpha_{\ell j}$ on the  parameters $\alpha_j$ is only implicit.   
\end{rem}

\section{Appendix: on higher order Schwarz lemmas}
In this appendix, we establish higher order versions of Schwarz type lemmas in the vein of Lemma \ref{lemhigh}.

 \begin{lem}\label{lemsphigh}
Let $f: \D \rightarrow \D$ be a holomorphic function and $\zeta \in \D$ with $f^{(\ell)}(\zeta) = 0$ for all $ \ell=1,\ldots,k-1$. We have for all $w \in \D$ 
\begin{equation}\label{eqsphigh1}
\left|\cfrac{f(\zeta)-f(w)}{1 - \overline{f(\zeta)}f(w)}\right| \leq  \bigg|\cfrac{\zeta-w}{1-\Bar{\zeta}w}\bigg|^k
\end{equation}
and 
\begin{equation}\label{eqsphigh2}
\left|f^{(k)}(\zeta)\right| \leq k! \cfrac{1-|f(\zeta)|^2}{(1-|\zeta|^2)^k}.
\end{equation}
\noindent 
Moreover, in case of equality (with $w\neq \zeta$) then $f$ is of the form 
$f(w) = \cfrac{e^{i\theta}(B_{\zeta}(w))^k + f(\zeta)}{1 + e^{i\theta}\overline{f(\zeta)}(B_{\zeta}(w))^k}.$ 
\end{lem}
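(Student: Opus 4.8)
The plan is to reduce the statement to the ordinary Schwarz--Pick lemma by precomposing and postcomposing with suitable Blaschke-type automorphisms of $\D$. Concretely, fix $\zeta \in \D$ and set $a = f(\zeta)$. Consider the automorphism $M_a(w) = \dfrac{w - a}{1 - \bar{a}w}$ of $\D$ and form the function $g = M_a \circ f \circ B_{\zeta}^{-1}$, where $B_{\zeta}(w) = \dfrac{w-\zeta}{1-\bar\zeta w}$ as in the paper; note $B_{\zeta}^{-1} = B_{-\zeta}$. Then $g : \D \to \D$ is holomorphic with $g(0) = M_a(f(\zeta)) = 0$. The key observation is that the hypothesis $f^{(\ell)}(\zeta) = 0$ for $\ell = 1, \dots, k-1$ translates, via the chain rule, into $g^{(\ell)}(0) = 0$ for $\ell = 1, \dots, k-1$: since $M_a$ and $B_{-\zeta}$ are biholomorphisms, the vanishing order of $g$ at $0$ equals the vanishing order of $f(\cdot) - f(\zeta)$ at $\zeta$, which is at least $k$. (Here I would note $g^{(\ell)}(0)=0$ for $\ell \le k-1$ and also $g(0)=0$, so $\nu(g)\ge k$.)

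Now apply Lemma \ref{lemhigh} to $g$: for all $\eta \in \D$ we get $|g(\eta)| \leq |\eta|^k$ and $|g^{(k)}(0)| \leq k!$, with the equality statement $g(\eta) = e^{i\theta}\eta^k$. For inequality \eqref{eqsphigh1}, substitute $\eta = B_{\zeta}(w)$ and unwind: $g(B_\zeta(w)) = M_a(f(w))$, so $|M_{f(\zeta)}(f(w))| \le |B_\zeta(w)|^k$, which is exactly \eqref{eqsphigh1}. For inequality \eqref{eqsphigh2}, I would compute $g^{(k)}(0)$ by the chain rule. Writing $\Phi = M_a \circ f$, one has $\Phi^{(\ell)}(\zeta)=0$ for $1\le\ell\le k-1$, and $g(\eta) = \Phi(B_{-\zeta}(\eta))$; since the lower derivatives of $\Phi$ at $\zeta$ vanish, Fa\`a di Bruno collapses and $g^{(k)}(0) = \Phi^{(k)}(\zeta)\,\big(B_{-\zeta}'(0)\big)^k = \Phi^{(k)}(\zeta)(1-|\zeta|^2)^k$. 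Similarly, again because the lower derivatives vanish, $\Phi^{(k)}(\zeta) = M_a'(a)\,f^{(k)}(\zeta) = \dfrac{f^{(k)}(\zeta)}{1-|f(\zeta)|^2}$. Combining, $|g^{(k)}(0)| = \dfrac{|f^{(k)}(\zeta)|(1-|\zeta|^2)^k}{1-|f(\zeta)|^2} \le k!$, which rearranges to \eqref{eqsphigh2}.

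For the equality case, suppose equality holds in \eqref{eqsphigh1} for some $w \neq \zeta$ (the case of equality in \eqref{eqsphigh2} is analogous, using the second equality clause of Lemma \ref{lemhigh}). Then $|g(\eta_0)| = |\eta_0|^k$ for $\eta_0 = B_\zeta(w) \neq 0$, so Lemma \ref{lemhigh} forces $g(\eta) = e^{i\theta}\eta^k$ for some $\theta \in \R$. Inverting the relation $g = M_a \circ f \circ B_{-\zeta}$ gives $f(w) = M_{a}^{-1}\big(e^{i\theta}(B_\zeta(w))^k\big) = M_{-a}\big(e^{i\theta}(B_\zeta(w))^k\big)$, and writing out $M_{-a}(u) = \dfrac{u + a}{1 + \bar a u}$ with $a = f(\zeta)$ yields precisely the claimed form $f(w) = \dfrac{e^{i\theta}(B_\zeta(w))^k + f(\zeta)}{1 + e^{i\theta}\overline{f(\zeta)}(B_\zeta(w))^k}$.

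The routine but slightly delicate point, and the only place where care is needed, is the chain-rule bookkeeping: one must verify that the vanishing of $f^{(\ell)}(\zeta)$ for $\ell<k$ really does transfer to $g^{(\ell)}(0)=0$ for $\ell<k$ through \emph{both} the inner substitution $B_{-\zeta}$ and the outer composition with $M_a$, and that the top-order derivative picks up exactly the factors $(1-|\zeta|^2)^k$ and $(1-|f(\zeta)|^2)^{-1}$. This is immediate once one observes that composition with a biholomorphism of $\D$ preserves vanishing order, and that for a function vanishing to order $k$ at a point, the $k$-th derivative of a composition is just the ``leading'' term of Fa\`a di Bruno's formula. No genuine obstacle arises; the whole argument is a transport of Lemma \ref{lemhigh} via the automorphism group of $\D$.
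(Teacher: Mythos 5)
Your proof is correct and follows essentially the same route as the paper's: conjugating $f$ by the Blaschke automorphisms $B_{-\zeta}$ and $B_{f(\zeta)}$ to reduce to Lemma \ref{lemhigh}, with the same derivative computation $g^{(k)}(0)=M_a'(a)\,f^{(k)}(\zeta)\,(B_{-\zeta}'(0))^k$ and the same inversion for the equality case. Your explicit Fa\`a di Bruno justification of the chain-rule bookkeeping is a welcome elaboration of what the paper calls a ``straightforward computation,'' but it is not a different argument.
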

\begin{proof}
Let $\zeta,w \in \D$. Set $a=f(\zeta) $ and consider the function $g= B_a\circ f\circ B_{-\zeta}:\D \to \D$. We have  $g(0) = 0$ and, for  $\ell=1,\ldots,k-1$ 
$$g^{(\ell)}(0) = (B_a \circ f\circ B_{-\zeta})^{(\ell)}(0) = 0$$
since $(f\circ B_{-\zeta})^{(\ell)}(0) = 0$. Moreover, a straightforward computation also leads to  
$$g^{(k)}(0) =   B_a'((f\circ B_{-\zeta})(0))\cdot (f\circ B_{-\zeta})^{(k)}(0) = B_a'(a)\cdot f^{(k)}(\zeta) \cdot (B_{-\zeta}'(0))^k.$$
By applying Lemma \ref{lemhigh} with $g$, we obtain  for $\tilde{w}\in \D$ 
$$|g(\tilde{w})| = \bigg|\cfrac{f(B_{-\zeta}(\tilde{w})) - a}{1 - \overline{a}f(B_{-\zeta_1}(\tilde{w}))}\bigg| \leq |\tilde{w}|^k,$$
which, for  $\tilde{w}= B_{\zeta}(w)$, gives (\ref{eqsphigh1}), and 
 $$\left|g^{(k)}(0)\right| = \left|f^{(k)}(\zeta)\right| \cdot \left|B_a'(a)\right| \cdot \left|(B_{-\zeta}'(0))\right|^k \leq k!$$
 which implies (\ref{eqsphigh2}). 

 Finally, if equality holds, then by Lemma \ref{lemhigh} we have $g(\zeta) = e^{i\theta}\zeta^{k}$ for some $\theta \in \R$, and thus 
 $$f(w) = B_{-a}\circ g \circ B_{\zeta}(w) = \cfrac{e^{i\theta}(B_{\zeta}(w))^k + f(\zeta)}{1 + e^{i\theta}\overline{f(\zeta)}(B_{\zeta}(w))^k}.$$ \end{proof}

We  also establish a higher order Schwarz Lemma in the case of  the punctured disc.

\begin{lem}\label{lempdhigh}
Let $f: \D \rightarrow \D\setminus\{0\}$ be a holomorphic function such that $f^{(\ell)}(0) = 0$ for all $\ell=1,\ldots,k-1$. Then
\begin{equation}\label{eqpdhigh} 
 \left|f^{(k)}(0)\right| \leq -2k!|f(0)|\log|f(0)|.
 \end{equation}
 Moreover in case of equality then $f$ is of the form   
 \begin{equation*}
 f(\zeta) = e^{i\alpha}  e^{ \log|f(0)|\cfrac{1+e^{i\theta}\zeta^k}{1 - e^{i\theta}\zeta^k}}
 \end{equation*}
 for some $\theta \in \R$ and where $f(0)=|f(0)|e^{i\alpha}$. 
\end{lem}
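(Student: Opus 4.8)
The plan is to pass to the universal cover of the punctured disc. Since $\D$ is simply connected and $f$ is nowhere zero, $f$ admits a holomorphic logarithm on $\D$: picking $\alpha\in\R$ with $f(0)=|f(0)|e^{i\alpha}$, there is a holomorphic $\tilde f:\D\to\C$ with $e^{\tilde f}=f$ and $\tilde f(0)=\log|f(0)|+i\alpha$. Because $|f|<1$ on $\D$, we have $\operatorname{Re}\tilde f=\log|f|<0$, so $\tilde f$ maps $\D$ into the left half-plane $H=\{w\ |\ \operatorname{Re}w<0\}$, with $c:=\tilde f(0)\in H$ and $\operatorname{Re}c=\log|f(0)|$. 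First I would transfer the vanishing hypotheses to $\tilde f$: writing $f=f(0)h$ with $h$ holomorphic and $h(0)=1$, the assumption $f^{(\ell)}(0)=0$ for $\ell=1,\ldots,k-1$ says precisely that $h-1$ vanishes to order $\geq k$ at $0$; since $\tilde f-c$ is a branch of $\log h$ vanishing at $0$ and $\log$ has a simple zero at $1$, the function $\tilde f-c$ also vanishes to order $\geq k$ and has the same coefficient of $\zeta^k$ as $h-1$. Hence $\tilde f^{(\ell)}(0)=0$ for $\ell=1,\ldots,k-1$ and $\tilde f^{(k)}(0)=h^{(k)}(0)=f^{(k)}(0)/f(0)$.

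Next I would push $\tilde f$ back into the disc. Let $\phi(w)=\dfrac{w-c}{w+\overline c}$; this is a conformal equivalence of $H$ onto $\D$ with $\phi(c)=0$, since $|w+\overline c|^2-|w-c|^2=4\operatorname{Re}w\operatorname{Re}c>0$ for $w\in H$ and $\phi$ sends the line $i\R$ onto $\partial\D$. Put $g:=\phi\circ\tilde f:\D\to\D$. Since $\tilde f-c$ vanishes to order $\geq k$ at $0$ and $\phi$ is holomorphic near $c$ with $\phi(c)=0$, the composition $g$ vanishes to order $\geq k$ at $0$, and comparing the lowest-order Taylor coefficients gives
$$g^{(k)}(0)=\phi'(c)\,\tilde f^{(k)}(0),\qquad \phi'(c)=\frac{c+\overline c}{(c+\overline c)^2}=\frac{1}{2\operatorname{Re}c}=\frac{1}{2\log|f(0)|}.$$
Applying the higher order Schwarz Lemma \ref{lemhigh} to $g$ yields $|g^{(k)}(0)|\leq k!$, hence $|\tilde f^{(k)}(0)|\leq -2k!\log|f(0)|$; multiplying by $|f(0)|$ and using $\tilde f^{(k)}(0)=f^{(k)}(0)/f(0)$ gives inequality $(\ref{eqpdhigh})$.

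Finally, for the equality case, $|f^{(k)}(0)|=-2k!|f(0)|\log|f(0)|$ forces $|g^{(k)}(0)|=k!$, so by the equality statement of Lemma \ref{lemhigh} we have $g(\zeta)=e^{i\theta}\zeta^k$ for some $\theta\in\R$. Inverting $\phi$ (namely $\phi^{-1}(u)=\frac{c+\overline c\,u}{1-u}$) and using $c=\log|f(0)|+i\alpha$, a short computation gives $\tilde f(\zeta)=\log|f(0)|\cdot\frac{1+e^{i\theta}\zeta^k}{1-e^{i\theta}\zeta^k}+i\alpha$, and exponentiating yields the stated form of $f$; conversely, one checks directly that any such $f$ maps $\D$ into $\D\setminus\{0\}$ (since $\operatorname{Re}\frac{1+e^{i\theta}\zeta^k}{1-e^{i\theta}\zeta^k}>0$ on $\D$ while $\log|f(0)|<0$) and realizes equality. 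The argument has no deep obstacle; the one point to be careful about is that the order comparisons (between $\log h$ and $h-1$, and between $g$ and $\tilde f-c$) are \emph{exact} at level $k$, not merely inequalities of vanishing orders — this is what makes the constant $-2k!|f(0)|\log|f(0)|$ come out sharp.
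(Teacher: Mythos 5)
Your proof is correct and follows essentially the same route as the paper: write $f=e^{\tilde f}$ with $\tilde f$ mapping into the left half-plane, transport to the disc by a M\"obius map, and apply a higher order Schwarz lemma, including the same treatment of the equality case. The only (cosmetic) difference is that you centre the M\"obius map at $\tilde f(0)$ so as to invoke Lemma \ref{lemhigh} directly, whereas the paper uses the fixed Cayley transform and then the Schwarz--Pick version, Lemma \ref{lemsphigh}.
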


\begin{proof} We write $f$ as an exponential $f=e^g$ for some holomorphic function 
$$g: \D \to \left\{ \zeta \in \C \ | \ \Re e\zeta<0\right\}.$$ 
It follows that $\varphi \circ (-ig)$, where $\varphi:\HH \to \D$ is the Cayley transform  $\varphi(\zeta)=\frac{\zeta-i}{\zeta+i}$,   is a self map of the unit disc. For $\ell=1,\ldots,k-1$ we have $g^{(\ell)}(0)$ and thus, after a direct computation we obtain 
$$(\varphi \circ (-ig))^{(\ell)}(0) = 0$$
and 
$$(\varphi \circ (-ig))^{(k)}(0) = \varphi'(-ig(0))  g^{(k)}(0) = \frac{\varphi'(-ig(0))   f^{(k)}(0)}{f(0)}.$$
We now apply Lemma \ref{lemsphigh} to $\varphi \circ (-ig)$ and get 

$$\bigg|\cfrac{(\varphi \circ (-ig))^{(k)}(0)}{k!}\bigg| = \frac{1}{k!} \bigg|\cfrac{2}{(g(0) - 1)^2}\, \cdot \,  \cfrac{f^{(k)}(0)}{ f(0)}\bigg| \leq 1 - |(\varphi\circ (-ig))(0)|^2 = 1 - \left| \cfrac{g(0) +1 }{g(0) -1}\right|^2.$$
This implies
$$\frac{2}{k!}\left|\cfrac{f^{(k)}(0)}{ f(0)}\right| \leq |1 - g(0)|^2 - |1 + g(0)|^2 = 2(-g(0) - \overline{g(0)}) = -4\Re e g(0) = -4\log|f(0)|$$
and proves (\ref{eqpdhigh}). 

We now write  $f(0) = |f(0)|e^{i\alpha}$ with $0 \leq \alpha \leq 2\pi$. Then,  we can renormalize the holomorphic function g such that $ f = e^{i\alpha + g}$ where $g(0) = \log|f(0)| \in \R$. 
In case of equality in (\ref{eqpdhigh}), then by Lemma \ref{lemsphigh} we have  
$$\varphi \circ (-ig) = B_{-\varphi(-ig(0))} \circ  (e^{i\theta}\zeta^k)$$ 
and so
$$ g(\zeta) =- \cfrac{B_{-\varphi(-ig(0))}\circ (e^{i\theta}\zeta^k) + 1}{1 - B_{-\varphi(-ig(0))} \circ (e^{i\theta}\zeta^k)} = 
g(0)\cfrac{e^{1+i\theta}\zeta^k}{1 - e^{i\theta}\zeta^k}$$
leading to (\ref{eqpdhigh}). 
\end{proof}

\noindent  {\it Acknowledgments.}  This work was done in the framework of the  Summer Research Camp in Mathematics designed by the Department of Mathematics at the American University of Beirut (AUB) and that benefitted from a generous support from the Center for Advanced Mathematical Sciences at AUB.

{\small
\noindent Seok Ban, Florian Bertrand, Amir Jaber Chehayeb, Adam Salha, Walid Tabbara\\
Department of Mathematics\\\
American University of Beirut, Beirut, Lebanon\\{\sl E-mail addresses}: sjb11@mail.aub.edu, fb31@aub.edu.lb, amj24@mail.aub.edu, ars23@mail.aub.edu, wkt00@mail.aub.edu\\

}

\end{document}